\theoremstyle{definition} 
\newtheorem{thm}{Theorem}    
\newtheorem{remark}[thm]{Remark}  
\newtheorem{lemma}[thm]{Lemma}
\newtheorem{proposition}[thm]{Proposition}
\newcommand{\RR}{\mathbb R}
\newcommand{\ZZ}{\mathbb Z}
\newcommand{\QQ}{\mathbb Q}
\newcommand{\CC}{\mathbb C}
\newcommand{\Length}{\operatorname{Length}}
\newcommand{\e}{\varepsilon}
\title{Tropical formulae for summation over a part of $SL(2,\mathbb Z)$}
\author{Nikita Kalinin\footnote{National Research University Higher School of Economics, Soyuza Pechatnikov str., 16, St. Petersburg, 190121, Russian Federation. Support from the Basic Research Program of the National Research University Higher School of Economics is gratefully acknowledged. Supported in part by Young Russian Mathematics award.}, Mikhail Shkolnikov\footnote{IST Austria. Klosterneuburg 3400, Am campus 1. Supported by ISTFELLOW program.}}
\begin{document}
\maketitle

\medskip

Let $f(a,b,c,d)=\sqrt{a^2+b^2}+\sqrt{c^2+d^2}-\sqrt{(a+c)^2+(b+d)^2}$, let $(a,b,c,d)$ stand for $a,b,c,d\in\ZZ_{\geq 0}$ such that $ad-bc=1$. Define
\begin{equation}
\label{eq_main}
F(s) = \sum_{(a,b,c,d)} f(a,b,c,d)^s.
\end{equation} 
In other words, we consider the sum of the powers of the triangle inequality defects for the lattice parallelograms (in the first quadrant) of area one.

We prove that $F(s)$ converges when $s>1$ and diverges at $s=1/2$. ({\bf This papers differs from its published version: Fedor Petrov showed us how to easily prove that $F(s)$ converges for $s>2/3$ and diverges for $s\leq 2/3$, see below.}) 
We also prove 
$$\sum\limits_{(a,b,c,d)} \frac{1}{(a+c)^2(b+d)^2(a+b+c+d)^2} = 1/3,$$


and show a general method to obtain such formulae. The method comes from the consideration of the tropical analogue of the caustic curves, whose moduli give a complete set of continuous invariants on the space of convex domains.


{\bf keywords: tropical geometry, summation, SL(2,Z), pi

MSC: 14T05, 14G10, 11A55,11A25,11H06}

\section{A generalization of Archimedes' exhaustion method}

\rightline{\it Гужем узырлэн ужасьёссэ кема ужатэмез потэм.}
\rightline{\it Кенеш сётэмзыя, со пась дӥся, сапег кутча, писпуэ тубе}
\rightline{\it но шундыез саникен кутыса улэ, медаз, пе, пуксьы.}
\rightline{\it Ӝыны нунал гинэ чидаз.}
\rightline{\it (An excerpt from an Udmurt folklore).} 
\rightline{ A merchant desired that peasants work more.}
\rightline{ Somebody gave him an advice: climb a tree in a fur coat and valenki,}
\rightline{ and try to retain the sun with a pitchfork, preventing sunset.}
\rightline{ Only half a day he withstood there.}

We find this epigraph complementing the history of Easter Island: it is easy in destroy a civilization simply trying to maximize formal parameters such as the height of a moai, for example.


We have already described the following ideas in \cite{pi_short}. Here we give more detailed and motivated exposition.
\subsection{Triangles formed by tangent lines}

Let $p_1,q_1,p_2,q_2,\in\RR, p_1q_2-p_2q_1=D>0$.
Consider three lines given by 
\begin{align*}
l_1(x,y)=&p_1x+q_1y+c_{p_1,q_1}=0,\\
l_2(x,y)=&p_2x+q_2y+c_{p_2,q_2}=0,\\
l_3(x,y)=&(p_1+p_2)x+(q_1+q_2)y+c_{p_1+p_2,q_1+q_2}=0.\\
\end{align*}
Let 
$A_{ij}$ be the intersection point of $i$-th and $j$-th lines. We compute

$$A_{12}=\frac{1}{D}(c_{p_2,q_2}q_1-c_{p_1,q_1}q_2, c_{p_1,q_1}p_2-c_{p_2,q_2}p_1)$$
$$A_{13}=\frac{1}{D}(c_{p_1+p_2,q_1+q_2}q_1-c_{p_1,q_1}(q_1+q_2), c_{p_1,q_1}(p_1+p_2)-c_{p_1+p_2,q_1+q_2}p_1)$$
$$A_{23}=\frac{1}{D}(c_{p_1+p_2,q_1+q_2}q_2-c_{p_2,q_2}(q_1+q_2), c_{p_2,q_2}(p_1+p_2)-c_{p_1+p_2,q_1+q_2}p_2).$$

By direct computation we obtain
\begin{lemma}
\label{lemma_main}
The area of the triangle $A_{12}A_{13}A_{23}$ is $$\frac{(c_{p_1,q_1}+c_{p_2,q_2}-c_{p_1+p_2,q_1+q_2})^2}{2D}.$$ The lattice length of each of sides $A_{12}A_{23},A_{12}A_{13},A_{13}A_{23}$ with respect to the lattice $\ZZ(p_1,q_1)+\ZZ(p_2,q_2)$ is $$\frac{1}{D}(c_{p_1,q_1}+c_{p_2,q_2}-c_{p_1+p_2,q_1+q_2}).$$ Moreover, this number is equal to the value of $l_1,l_2,l_3$ at the point where all of them are equal.
\end{lemma}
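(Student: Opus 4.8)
The plan is to avoid leaning on the explicit coordinates of $A_{12},A_{13},A_{23}$ (beyond an optional direct check) and instead to exploit the single algebraic identity that drives all three assertions. Writing $\delta:=c_{p_1,q_1}+c_{p_2,q_2}-c_{p_1+p_2,q_1+q_2}$, I first observe that
$$l_1(x,y)+l_2(x,y)-l_3(x,y)=\delta$$
identically in $(x,y)$, because the linear parts cancel by construction: $(p_1,q_1)+(p_2,q_2)=(p_1+p_2,q_1+q_2)$. The last assertion of the lemma is then immediate: at a point where $l_1=l_2=l_3=t$ the identity gives $t+t-t=\delta$, so the common value is exactly $\delta$.

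For the area I would introduce the affine change of coordinates $\Phi\colon(x,y)\mapsto(u,v)=(l_1(x,y),l_2(x,y))$. Its linear part has rows $(p_1,q_1),(p_2,q_2)$, with determinant $D\neq0$, so $\Phi$ is an affine isomorphism scaling area by $D$. In the new coordinates $l_1=u$, $l_2=v$, and by the identity above $l_3=u+v-\delta$; hence the three vertices become $A_{12}=(0,0)$, $A_{13}=(0,\delta)$, $A_{23}=(\delta,0)$, a right triangle of area $\tfrac12\delta^2$. Pulling back by $\Phi^{-1}$ divides area by $D$, giving area $\delta^2/(2D)$, as claimed.

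For the lattice lengths I would compute the edge vectors, either directly from the listed coordinates of $A_{12},A_{13},A_{23}$ or, more cleanly, by transporting the edges $(0,\delta),(\delta,0),(\delta,-\delta)$ of the model triangle back through the linear part of $\Phi^{-1}$, namely $\tfrac1D\left(\begin{smallmatrix} q_2 & -q_1\\ -p_2 & p_1\end{smallmatrix}\right)$. This yields
$$A_{13}-A_{12}=\tfrac{\delta}{D}(-q_1,p_1),\quad A_{23}-A_{12}=\tfrac{\delta}{D}(q_2,-p_2),\quad A_{23}-A_{13}=\tfrac{\delta}{D}(q_1+q_2,-(p_1+p_2)).$$
Each edge is therefore $\tfrac{\delta}{D}$ times the $90^\circ$-rotation of one of the vectors $(p_1,q_1),(p_2,q_2),(p_1+p_2,q_1+q_2)$, that is, a primitive vector in the direction in which the side is measured. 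Since a basis vector, or the sum of the two basis vectors, is primitive, the lattice length of each side comes out to be exactly $\delta/D$.

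The only genuinely delicate point is this lattice-length claim: one must track that the relevant primitive vector along each side is the rotation $(-q_i,p_i)$ of a generator of $\Lambda=\ZZ(p_1,q_1)+\ZZ(p_2,q_2)$, verify that it is primitive, and note that rotation by $90^\circ$ preserves both primitivity and Euclidean norm, so that measuring an edge against $\Lambda$ returns the coefficient $\delta/D$ with no index correction. The area and common-value statements, by contrast, fall out instantly from the identity $l_1+l_2-l_3=\delta$, so the computational weight of the lemma sits entirely in correctly reading off and normalizing the three edge vectors.
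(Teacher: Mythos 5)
Your proof is correct, and it takes a genuinely different route from the paper's, which offers no argument beyond listing the coordinates of $A_{12},A_{13},A_{23}$ and asserting the lemma ``by direct computation'' --- i.e.\ substituting those coordinates into the area formula, subtracting them pairwise for the edge vectors, and solving $l_1=l_2=l_3$ directly. Your identity $l_1+l_2-l_3\equiv\delta$ combined with the affine change $(u,v)=(l_1(x,y),l_2(x,y))$ replaces all of that at once: the triangle becomes the model right triangle $(0,0),(0,\delta),(\delta,0)$, the area claim is just the Jacobian factor $D$, and the common-value claim is $t+t-t=\delta$; moreover in model coordinates the triple point visibly exists and is unique ($u=v=\delta$), a one-line remark worth adding since the lemma speaks of ``the point where all of them are equal.'' The real dividend of your approach is that it isolates, and treats honestly, the one subtlety the paper elides entirely: each side lies on $l_i=0$ and hence points along the $90^\circ$ rotation $(-q_i,p_i)$ of a lattice vector, which in general does \emph{not} lie in $\Lambda=\ZZ(p_1,q_1)+\ZZ(p_2,q_2)$ when the generators are irrational (as in the paper's parabola example with generator $(2\sqrt\mu,1)$); your observations that rotation by $90^\circ$ preserves Euclidean norm and primitivity, and that $v_1$, $v_2$, $v_1+v_2$ are primitive in $\Lambda$ (their coordinate pairs $(1,0)$, $(0,1)$, $(1,1)$ being coprime), are exactly what makes ``lattice length with respect to $\Lambda$'' well defined here and equal to $\delta/D$. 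In the paper's main application $\Lambda=\ZZ^2$ is rotation-invariant, so this issue is invisible there; your version of the lemma is the one that actually covers the generality in which Theorem~\ref{th_main} is later applied.
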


Recall that the lattice length of an interval $I$ in a direction of a primitive lattice vector $(p,q)\in\ZZ^2$ is the Euclidean length of $I$ divided by $\sqrt{p^2+q^2}$.  If both endpoints of $I$ belong to $\ZZ^2$, then the lattice length of $I$ is the number of lattice points on $I$ minus one. Define the lattice length of a curve as the sum of the lattice lengths of its parts which are intervals with rational slope. For example, if a curve has no such intervals, its lattice length is zero. With the same rule we define all these notions for any $\ZZ^2$ lattice in $\RR^2$.


\subsection{Main technical theorem}

Let $f:[x_0,x_1]\to \RR$ be a concave continuous non-negative function. Denote $A_2=(x_0,f(x_0)), A_1=(x_1,f(x_1))$. Consider two tangent lines to the plot of $f$ given by $P_2x+Q_2y+c_{P_2,Q_2}=0$ (tangent at $(x_0,f(x_0))$) and $P_1x+Q_1y+c_{P_1,Q_1}=0$ (tangent at $(x_1,f(x_1))$) such that $P_1Q_2-P_2Q_1=D>0$  
and $P_ix+Q_if(x)+c_{P_i,Q_i}\leq 0$ for $i=1,2, x\in [x_0,x_1]$.

 Denote by $S$ the area in between of these lines and the graph of $f$, see Figure~\ref{fig_tangents}. Let $A_{12}$ be the intersection point of these two lines. Denote by $L$ the sum of the lattice length of two intervals $A_1A_{12},A_2A_{12}$ minus the lattice length of the curve $(x,f(x)), x_0\leq x \leq x_1$.

Let $SL^{+}(2,\ZZ)$ be the submonoid of the group $SL(2,\ZZ)$, generated by 
$$\begin{pmatrix}
1&1\\
0&1\\
\end{pmatrix}\text{ and }\begin{pmatrix}
1&0\\
1&1\\
\end{pmatrix}.
$$

\begin{thm}
\label{th_main}
Let pairs $(p_1,q_1),(p_2,q_2)$ runs by 
$\begin{pmatrix}
p_1&q_1\\
p_2&q_2\\
\end{pmatrix}\in SL^{+}(2,\ZZ)\cdot\begin{pmatrix}
P_1&Q_1\\
P_2&Q_2\\
\end{pmatrix}
.$ The following equalities hold.  
$$\frac{1}{2D}\sum (c_{p_1,q_1}+c_{p_2,q_2}-c_{p_1+p_2,q_1+q_2})^2=S,$$
$$\frac{1}{D}\sum (c_{p_1,q_1}+c_{p_2,q_2}-c_{p_1+p_2,q_1+q_2})=L,$$

where  $c_{p,q}$ is chosen such that $px+qy+c_{p,q}=0$ is a tangent line to the graph of $f$.

\end{thm}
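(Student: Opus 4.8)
The plan is to realize the sum as an iterated \emph{mediant subdivision} in the style of Archimedes' exhaustion, with the monoid $SL^{+}(2,\ZZ)$ bookkeeping the binary tree of subdivisions. First I would observe that left multiplication by an element $M\in SL^{+}(2,\ZZ)\subset SL(2,\ZZ)$ is a $\ZZ$-automorphism of $\ZZ^2$, so the row lattice $\ZZ(p_1,q_1)+\ZZ(p_2,q_2)$ equals $\Lambda_0:=\ZZ(P_1,Q_1)+\ZZ(P_2,Q_2)$ at every node, and $p_1q_2-p_2q_1=D$ throughout. Hence Lemma~\ref{lemma_main} applies verbatim at each node, all lattice lengths being taken with respect to the single fixed lattice $\Lambda_0$; this is what makes the telescoping below consistent.

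Next I would set up the recursion. At the root the two tangent lines $\ell_1,\ell_2$ (directions $(P_1,Q_1),(P_2,Q_2)$) meet at $A_{12}$, and the region $S$ lies between them and the graph. Drawing the tangent line $\ell_3$ in the mediant direction $(P_1+P_2,Q_1+Q_2)$, tangent at a point $B$ of the graph, cuts $S$ into the triangle $A_{12}A_{13}A_{23}$ of Lemma~\ref{lemma_main} together with the two subregions $R_2$ (between $\ell_1,\ell_3$ and the arc $A_1B$) and $R_1$ (between $\ell_3,\ell_2$ and the arc $BA_2$). These two subregions are exactly the data produced by the generators $\left(\begin{smallmatrix}1&0\\1&1\end{smallmatrix}\right)$ and $\left(\begin{smallmatrix}1&1\\0&1\end{smallmatrix}\right)$, so the full binary tree of subdivisions is indexed by $SL^{+}(2,\ZZ)$. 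Since the triangle and $R_1,R_2$ tile $S$, the areas obey
\[
S \;=\; \frac{(c_{P_1,Q_1}+c_{P_2,Q_2}-c_{P_1+P_2,Q_1+Q_2})^2}{2D} \;+\; S_{R_1}+S_{R_2},
\]
and by concavity every defect $c_{p_1,q_1}+c_{p_2,q_2}-c_{p_1+p_2,q_1+q_2}$ is $\ge 0$ (Lemma~\ref{lemma_main} identifies it with the value of $l_1=l_2=l_3$ at their coincidence point). Iterating, the partial sum over nodes of depth $\le n$ equals $S$ minus the total area of the depth-$(n+1)$ leaf regions.

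For the lattice-length identity I would telescope along the tangent lines. Writing $\Length(I)$ for the lattice length of a segment $I$ with respect to $\Lambda_0$, additivity along $\ell_1,\ell_2,\ell_3$ gives $\Length(A_1A_{12})=\Length(A_1A_{13})+\Length(A_{13}A_{12})$, $\Length(A_2A_{12})=\Length(A_2A_{23})+\Length(A_{23}A_{12})$, and $\Length(A_{13}A_{23})=\Length(A_{13}B)+\Length(BA_{23})$, where $A_1A_{13},A_{13}B$ and $BA_{23},A_{23}A_2$ are the outer tangent sides of $R_2$ and $R_1$. Since Lemma~\ref{lemma_main} makes all three sides of the cut triangle have the same lattice length $\tfrac1D(c_{P_1,Q_1}+c_{P_2,Q_2}-c_{P_1+P_2,Q_1+Q_2})$, substituting and cancelling yields the recursion of the same shape
\[
L \;=\; \frac{1}{D}\bigl(c_{P_1,Q_1}+c_{P_2,Q_2}-c_{P_1+P_2,Q_1+Q_2}\bigr) \;+\; L_{R_1}+L_{R_2},
\]
with $L_{R_i}$ the analogue of $L$ for the subregion. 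Again iterating expresses the partial sum over depth $\le n$ as $L$ minus a leaf remainder.

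The crux, and the step I expect to be the main obstacle, is the passage to the limit, i.e.\ showing that both leaf remainders tend to $0$ as $n\to\infty$. Here I would use the Stern--Brocot/Farey property: the mediants generated by $SL^{+}(2,\ZZ)$ from $(P_1,Q_1),(P_2,Q_2)$ run through \emph{all} primitive directions in the angular sector they span, so the tangent lines drawn densely approximate the concave graph from outside. For the area this gives monotone convergence of the partial sums to $S$, the remainder being a nested family of regions squeezed onto the curve. For the lattice length the delicate point is that only the rational-slope segments of the graph contribute: on a maximal linear segment the matching mediant tangent eventually coincides with it and is accounted for exactly, while on a strictly concave arc the approximating tangent polygon has lattice length tending to that of the arc, namely $0$. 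Making this dichotomy uniform over the infinitely many leaves, and ruling out loss of lattice length in the limit of the tangent polygon, is the technical heart; I would control it by bounding each leaf remainder in terms of its own defect $c_{u_1}+c_{u_2}-c_{u_1+u_2}$, where $u_1,u_2$ are the tangent directions of that leaf, and this defect tends to $0$ by uniform continuity of $f$ on $[x_0,x_1]$ together with concavity.
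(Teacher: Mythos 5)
Your proposal follows essentially the same route as the paper: the paper's own proof of Theorem~\ref{th_main} is precisely the repeated application of Lemma~\ref{lemma_main}, cutting off the mediant-tangent triangle at each node of the binary tree indexed by $SL^{+}(2,\ZZ)$ and telescoping the area and lattice-length identities. You in fact go further than the paper, whose proof stops at ``etc.''\ and leaves the vanishing of the leaf remainders implicit, whereas you correctly isolate and sketch that limiting step (density of mediant directions, and the rational-segment versus strictly concave dichotomy for the lattice length).
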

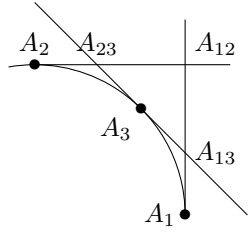
\begin{figure}[h]
\begin{center}
\begin{tikzpicture}[scale=2]
\draw (1,0) arc (0:100:1);
\draw (1,0)--(1,1.3);
\draw (0,1)--(1.3,1);
\draw (0,1.4142)--(1.4142,0); 

\draw (1,0) node{$\bullet$};
\draw (0,1) node {$\bullet$};
\draw (1/1.4142,1/1.4142) node {$\bullet$};

\draw (1,0) node[left] {$A_1$};
\draw (0,1) node[above] {$A_2$};
\draw (1/1.4142,1/1.4142) node[below left] {$A_3$};
\draw (1,1) node[above right] {$A_{12}$};
\draw (0.4142,1) node[above] {$A_{23}$};
\draw (1,0.4142) node[right] {$A_{13}$};

\end{tikzpicture}
\end{center}
\caption{$A_2=(x_0,f(x_0)),A_1=(x_1,f(x_1)), (p_1,q_1)=(1,0),(p_2,q_2)=(0,1)$}
\label{fig_tangents}
\end{figure}

\begin{proof}
This theorem follows from repetitive application of Lemma~\ref{lemma_main}. We start with two tangents $p_1x+q_1y+c_{p_1,q_1}=0$ and $p_2x+q_2y+c_{p_2,q_2}=0$ (at points $A_2,A_1$) and apply the lemma for $(p_1,q_1),(p_2,q_2)$ and there sum $(p_1+p_2,q_1+q_2)$. Then we apply the lemma for the tangent lines at pair of points $A_3,A_1$ and at $A_2,A_3$, etc.
\end{proof}

\subsection{Example: obtaining $\pi$}
\label{ex_pi}
If $f(x)=\sqrt{1-x^2}$ and $x_0=0,x_1=1$, then the area in between of tangent lines to the circle at $(0,1)$ and $(1,0)$ and the circle is $1-\pi/4$. Also, it is easy to see that $c_{p,q}=\sqrt{p^2+q^2}$ in this case. Thus, for $$f(a,b,c,d)=\sqrt{a^2+b^2}+\sqrt{c^2+d^2}-\sqrt{(a+c)^2+(b+d)^2}$$ Theorem~\ref{th_main} gives the identities:
\begin{equation}
\label{eq_main2}
F(2) = \sum f(a,b,c,d)^2=2(1-\pi/2),\ \ 
F(1) = \sum f(a,b,c,d)^1=2
\end{equation}
where the sum runs by all $a,b,c,d\in\ZZ_{\geq 0}$ such that $ad-bc=1$.  The second equality is due to the fact that the sum of the lattice lengths of two tangent intervals $A_2A_{12},A_1A_{12}$ is $2$, see \cite{pi_short}.

\subsection{Example: quadratic functions}
 Consider the curve given by the equation $y=1-\mu x^2$ with $\mu>0$. For $p,q\in\ZZ_{\geq 0}$ the line $px+qy+c_{p,q}=0$ is tangent to this curve if $px+q(1-\mu x^2)+c_{p,q}=0$ has a double root, i.e. its discriminant $p^2+4q\mu(c_{p,q}+q)$ is zero. Therefore, $c_{p,q}=-\frac{4q^2\mu+p^2}{4q\mu}$.  

Then, $-c_{p_1,q_1}-c_{p_2,q_2}+c_{p_1+p_2,q_1+q_2}$ in this case is equal to $$\frac{4q_1^2\mu+p_1^2}{4q_1\mu}+\frac{4q_2^2\mu+p_2^2}{4q_2\mu} - \frac{4(q_1+q_2)^2\mu+(p_1+p_2)^2}{4(q_1+q_2)\mu},$$ which is, in turn,
$$\frac{(4q_1^2\mu+p_1^2)q_2(q_1+q_2)+(4q_2^2\mu+p_2^2)q_1(q_1+q_2)-(4(q_1+q_2)^2\mu+(p_1+p_2)^2)q_1q_2}{4\mu q_1q_2(q_1+q_2)}$$

$$=\frac{p_1^2q_2^2+p_2^2q_1^2-2p_1p_2q_1q_2}{4\mu q_1q_2(q_1+q_2)} = \frac{D^2}{4\mu q_1q_2(q_1+q_2)}.$$

Then, at point $(\frac{1}{\sqrt\mu},0)$ the equation of the tangent line at this point is $2\sqrt\mu x+ y = 2$. Consider the lattice generated by $(2\sqrt\mu,1),(0,1)$. In this case $D=2\sqrt\mu$. Let $$\begin{pmatrix}
p_1&q_1\\
p_2&q_2
\end{pmatrix}\in SL^+(2,\ZZ)\cdot\begin{pmatrix}
2\sqrt\mu&1\\
0&1
\end{pmatrix}
$$

Therefore $$\sum \frac{D^3}{2\cdot16\mu^2q_1^2q_2^2(q_1+q_2)^2} + \int_0^{1/\sqrt \mu}(1-\mu x^2) = \frac{1}{2}\left(\frac{1}{\sqrt\mu}+\frac{1}{2\sqrt\mu}\right),$$

this gives

$$
\sum \frac{1}{4q_1^2q_2^2(q_1+q_2)^2} =\frac{1}{12}.
$$

Therefore the dependence on $\mu$ is mysteriously eliminated! What should not surprise us:  the expression depends only on second coordinates of the vectors $(p,q)$ and we start with vectors $(0,1),(2\sqrt\mu,1)$. It is the same as to start with vectors $(1,1),(0,1)$. Therefore we can rewrite it as

$$\sum_{\substack{p_1,q_1,p_2,q_2\geq 0\\ p_1q_2-p_2q_1=1\\
q_i\geq p_i}} \frac{1}{q_1^2q_2^2(q_1+q_2)^2} = \frac{1}{3}$$

Then, after a change of coordinates, 
we can write
$$\sum_{\substack{p_1,q_1,p_2,q_2\geq 0\\ p_1q_2-p_2q_1=1}} \frac{1}{(p_1+q_1)^2(p_2+q_2)^2(p_1+q_1+p_2+q_2)^2} = \frac{1}{3}$$


%
%

\subsection{Example: triangle with irrational slope}

Take a positive irrational $\alpha$ and let $f(x) = \alpha-\alpha x$ and $x_0=0,x_1=1$. We start with the graph of $f$ and support lines $x-\alpha=0,y-1=0$.

Then, we obtain the following formula $$\alpha = \sum_{n=0}^\infty (p_n-\alpha q_n)^2r_{n+1},$$
$$\alpha +1= \sum_{n=0}^\infty (p_n-\alpha q_n)r_{n+1},$$
 where $r_0,r_1,\dots$ are numbers in the continued fraction for $\alpha$  and $\frac{p_n}{q_n}$ is the $n$-th convergent to $\alpha$. We use the convention 
 $(p_0,q_0)=(1,0), (p_1,q_1) = ([\alpha],1)$, $r_1=[\alpha]=\frac{p_1}{q_1}, r_1= [\frac{1}{\alpha-r_1}], \frac{p_2}{q_2} = r_1+\frac{1}{r_2}$, etc.  

%

\subsection{Convergence of $F$}
We thank Yves-Fran\c{c}ois Petermann, Fedor Petrov and Fedor Nazarov who provided us with the ideas of this section. Denote $||a,b||=\sqrt{a^2+b^2}$.

\begin{lemma}
\label{eq_fEst}
For $f=||a,b||+||c,d||-||a+c,b+d||, ad-bc=1$ we have
\begin{equation}
\label{eq_fEstimation}
f(a,b,c,d)=\frac{2}{\big(||a,b||+||c,d||+||a+c,b+d||\big)\cdot\big(||a,b||\cdot ||c,d||+ac+bd\big)}.
\end{equation}

\end{lemma}
\begin{proof}
Indeed, 
$$f(a,b,c,d)\big(||a,b||+||c,d||+||a+c,b+d||\big)=$$ 
$$= \big(||a,b||+||c,d||\big)^2-||a+c,b+d||^2 = $$
$$= 2(\sqrt{(a^2+b^2)(c^2+d^2)}-ac-bd),$$
which, in turn, after multiplication by $\sqrt{(a^2+b^2)(c^2+d^2)}+ac+bd$ and accompanying by the fact that $ad-bc=1$ gives the desired estimate.
\end{proof}
\begin{lemma}
For $s>1$ the series $F(s)$ converges.
\end{lemma}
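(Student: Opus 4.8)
The plan is to bound $f$ pointwise by a constant multiple of $\big((a+c)^2+(b+d)^2\big)^{-1}$ and then to compare $F(s)$ with the convergent two-dimensional series $\sum_{m,n\geq 1}(m^2+n^2)^{-s}$. Write $u=||a,b||$, $v=||c,d||$, and let $(m,n)=(a+c,b+d)$ be the mediant, with norm $||a+c,b+d||$. Since $a,b,c,d\geq 0$ force $ac+bd\geq 0$ and $||a+c,b+d||\geq 0$, dropping these nonnegative terms in the denominator of \eqref{eq_fEstimation} only makes the denominator smaller, so from Lemma~\ref{eq_fEst} I get
\[
f(a,b,c,d)\leq \frac{2}{(u+v)\,uv}.
\]

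Next I would bound both factors from below by $||a+c,b+d||$. The triangle inequality gives $||a+c,b+d||\leq u+v$. For the product, note that neither $(a,b)$ nor $(c,d)$ can vanish (otherwise $ad-bc=0$), so $\min(u,v)\geq 1$, while $\max(u,v)\geq \tfrac12(u+v)\geq \tfrac12\,||a+c,b+d||$; multiplying gives $uv\geq \tfrac12\,||a+c,b+d||$. Hence $(u+v)\,uv\geq \tfrac12\,||a+c,b+d||^{2}$ and
\[
f(a,b,c,d)\leq \frac{4}{||a+c,b+d||^{2}}=\frac{4}{(a+c)^2+(b+d)^2}.
\]

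The combinatorial heart of the argument — and the step I expect to require the most care — is to control how many quadruples share a given mediant, so that the sum is not overcounted. Setting $c=m-a$, $d=n-b$ one checks $ad-bc=an-bm$, so the constraint reads $an-bm=1$; in particular $\gcd(m,n)=1$, and $m,n\geq 1$ (a vanishing coordinate again forces $ad-bc=0$). For fixed coprime $(m,n)$ with $m,n\geq1$, the congruence $an\equiv 1\pmod m$ determines $a$ uniquely in $\{1,\dots,m\}$, whence $b=(an-1)/m$, $c=m-a$, $d=n-b$ are determined as well; thus the map $(a,b,c,d)\mapsto(a+c,b+d)$ is injective (in fact a bijection onto the coprime pairs, the Stern--Brocot correspondence, though injectivity is all I need here). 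Combining this with the pointwise bound,
\[
F(s)=\sum_{(a,b,c,d)} f(a,b,c,d)^s\ \leq\ 4^{s}\sum_{m,n\geq 1}\frac{1}{(m^2+n^2)^{s}},
\]
and the right-hand sum converges for $s>1$: dyadically, the annulus $m^2+n^2\in[2^k,2^{k+1})$ contains $O(2^k)$ lattice points and contributes $O\big(2^{k(1-s)}\big)$, a convergent geometric series exactly when $s>1$. I would remark that the slack lies in the estimate $uv\geq \tfrac12\,||a+c,b+d||$, which is wasteful for the ``bulk'' pairs where $u\asymp v\asymp||a+c,b+d||$; sharpening it is what later yields the stronger threshold $s>2/3$.
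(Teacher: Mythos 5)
Your proof is correct, and while it starts from the same identity \eqref{eq_fEstimation} of Lemma~\ref{eq_fEst}, the counting step is genuinely different from the paper's. The paper uses the symmetry $f(a,b,c,d)=f(d,c,b,a)$ to reduce to $||a,b||\geq||c,d||$, bounds $f\leq ||a,b||^{-2}$, and observes that for fixed $(a,b)$ there is at most one admissible $(c,d)$ with $||c,d||\leq||a,b||$ (since $(c-a,d-b)\notin\ZZ_{\geq 0}^2$), which reduces everything to $\sum r_2(n)n^{-s}=4\zeta(s)L_{-4}(s)$, quoted from Hardy--Wright. You instead attach to each quadruple its mediant $(a+c,b+d)$, bound $f\leq 4\,||a+c,b+d||^{-2}$ (your chain $uv\geq\tfrac12||a+c,b+d||$ is valid because both vectors are nonzero integer vectors), and prove injectivity of the mediant map by solving $an-bm=1$; the only point worth making fully explicit there is $a\neq 0$ (as $a=0$ would force $-bm=1$), so that $a$ genuinely lies in $\{1,\dots,m\}$, where each residue class mod $m$ has exactly one representative. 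The two routes are quantitatively equivalent, since $\max(||a,b||,||c,d||)\leq||a+c,b+d||\leq 2\max(||a,b||,||c,d||)$ by $ac+bd\geq 0$ and the triangle inequality; both discard the factor $\min(||a,b||,||c,d||)$ in the denominator, which is exactly why both stop at $s>1$ while Petrov's refinement, keeping that factor, reaches the sharp threshold $2/3$ --- as you correctly diagnose. What your version buys: no symmetry/double-counting bookkeeping, a self-contained tail estimate (dyadic annuli instead of the $\zeta(s)L_{-4}(s)$ evaluation), and a parametrization by mediants that is precisely the Stern--Brocot structure organizing the vertices $z(v_1,v_2)$ of the tropical curve later in the paper. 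What the paper's version buys: the indexing by the longer vector $(a,b)$ together with the two-sided bound $||a,b||^{-3}\leq f\leq ||a,b||^{-2}$ is reused verbatim in the next lemma on convergence of the derivatives of $F$, so its bookkeeping pays off beyond this single statement.
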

\begin{proof}
Using the symmetry $f(a,b,c,d) = f(d,c,b,a)$ we may rewrite 
$$\sum f(a,b,c,d)^s = 2\sum_{||a,b||\geq ||c,d||} f(a,b,c,d)^s - f(1,0,0,1)^s.$$
Using \eqref{eq_fEstimation} we see that $f(a,b,c,d)\leq \frac{1}{||a,b||^{2}}$.
Then, for each $||a,b||\geq ||c,d||$ the vector $(c-a,d-b)$ does not belong to $\ZZ_{\geq 0}^2$, thus we can write
\begin{equation}
\label{eq_esti}
\sum f(a,b,c,d)^s \leq 2\sum_{(a,b)\in\ZZ_{\geq 0}^2} \frac{1}{||(a,b)||^{2s}}.
\end{equation}
Therefore, if we denote  by $r_2(n)$ the number of presentation of $n$ as the sum of two squares of natural numbers, we obtain 
$\sum f(a,b,c,d)^s\leq \sum r_2(n) n^{-s}$ which is equal to $4\zeta(s) L_{-4}(s)$ (see, for example, Theorem 306 in \cite{hardy1979introduction}) which is a finite number for $s>1$.
\end{proof}

\begin{lemma}
For $s>1$ all the derivatives of $F$ converge.
\end{lemma}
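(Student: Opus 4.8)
The plan is to show that every derivative $F^{(k)}(s)$ is given by a series whose terms are dominated by a convergent series for $s>1$, so that differentiation under the summation sign is justified. Writing $F(s)=\sum_{(a,b,c,d)} f(a,b,c,d)^s$ and formally differentiating $k$ times with respect to $s$, the $k$-th derivative of the general term $f^s = e^{s\log f}$ is $f(a,b,c,d)^s (\log f(a,b,c,d))^k$. Thus I would study the series
$$\sum_{(a,b,c,d)} f(a,b,c,d)^s \big(\log f(a,b,c,d)\big)^k$$
and prove it converges, which simultaneously gives convergence of the formal $k$-th derivative and, via uniform convergence on compact subsets of $(1,\infty)$, legitimizes the term-by-term differentiation.

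The key estimate is again Lemma~\ref{eq_fEst}, which yields $f(a,b,c,d)\leq \|a,b\|^{-2}$ whenever $\|a,b\|\geq\|c,d\|$. Since $0<f\leq 1$ on these terms (for all but finitely many terms $f$ is small), the logarithm $\log f$ is negative, but its absolute value $|\log f|\leq 2\log\|a,b\|$ grows only logarithmically. The crucial point is that for any fixed $s>1$ I can choose $\e>0$ with $s-\e>1$ and absorb the logarithmic factor into a small power: for every $k$ there is a constant $C_{k,\e}$ with $(\log t)^k \leq C_{k,\e}\, t^{\e}$ for $t\geq 1$, hence
$$f^{s}\,|\log f|^{k} \leq C_{k,\e}\, f^{s}\, f^{-\e} = C_{k,\e}\, f^{s-\e} \leq C_{k,\e}\,\frac{1}{\|a,b\|^{2(s-\e)}}.$$
Summing as in the previous lemma, the right-hand side is bounded by $2C_{k,\e}\sum_{(a,b)\in\ZZ_{\geq 0}^2}\|a,b\|^{-2(s-\e)} = 2C_{k,\e}\,\zeta(s-\e)L_{-4}(s-\e)\cdot 4$ up to the single term correction, which is finite because $s-\e>1$.

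For the uniform convergence needed to differentiate under the sum, I would fix a compact interval $[s_0,s_1]\subset(1,\infty)$, pick $\e$ so that $s_0-\e>1$, and observe that on this interval $f^{s}|\log f|^{k}\leq C_{k,\e} f^{s_0-\e}$ uniformly (using $f\leq 1$ for all but finitely many terms, and treating the finitely many terms with $f$ not small separately as a continuous finite sum). This furnishes a summable majorant independent of $s$ on the compact interval, so the Weierstrass $M$-test gives uniform convergence of the differentiated series and, inductively, the interchange of $\frac{d}{ds}$ with $\sum$. The main obstacle I expect is purely bookkeeping: controlling the finitely many terms where $f$ is not bounded above by $1$ (so that $\log f$ could be positive and the bound $|\log f|\leq 2\log\|a,b\|$ must be stated carefully), and confirming that the inequality $(\log t)^k\leq C_{k,\e}t^{\e}$ together with $f\leq\|a,b\|^{-2}$ is applied only on the tail where the estimate is valid; the analytic heart of the argument is already contained in the convergence proof of the previous lemma.
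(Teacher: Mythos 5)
Your proposal is correct and takes essentially the paper's route: term-by-term differentiation produces $\sum(\log f)^k f^s$, and both you and the paper then reduce to the estimate of Lemma~\ref{eq_fEst} as in the previous lemma, the only variation being that the paper bounds $|\log f|\le C\log\|a,b\|$ via the two-sided bound $\frac{1}{\|a,b\|^{3}}\le f\le\frac{1}{\|a,b\|^{2}}$, whereas you absorb $|\log f|^k\le C_{k,\e}f^{-\e}$ into a slightly smaller exponent, which needs only the upper bound together with $f<1$ (and $f<1$ in fact holds for \emph{every} term, since by \eqref{eq_fEstimation} one has $f\le 2/(2+\sqrt2)$, so your caveat about finitely many exceptional terms is vacuous). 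One incidental slip that does not affect the argument: your aside $|\log f|\le 2\log\|a,b\|$ is derived backwards --- an upper bound on $|\log f|$ requires the \emph{lower} bound $f\ge\|a,b\|^{-3}$ (yielding constant $3$, not $2$) --- but your actual chain of inequalities never uses it, and your explicit Weierstrass $M$-test justification of differentiating under the sum is more careful than the paper's one-line treatment.
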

\begin{proof}
Indeed, $k$-th derivative of $\sum f^s$ produces the series  $\sum (\log f)^k f^s$. We repeat all the steps in the proof of the precedent lemma and note that by \eqref{eq_fEstimation} we have that $$\frac{1}{||a,b||^{3}}\leq f(a,b,c,d)\leq \frac{1}{||a,b||^{2}},$$ so multiplication by $\log f$ changes \eqref{eq_esti} by adding $C\log(||a,b||)^k$ to the nominator which has no effect on convergence.
\end{proof}

\begin{lemma} 
For $s\leq 1/2$ the series $F(s)$ diverges.
\end{lemma}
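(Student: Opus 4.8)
The plan is first to reduce the claim to the single value $s=1/2$. Every admissible tuple has both $||a,b||\geq 1$ and $||c,d||\geq 1$, since neither vector can vanish when $ad-bc=1$; and the estimate $f(a,b,c,d)\leq 1/||a,b||^2$ established above (applied to whichever of the two representations $(a,b,c,d)$ or $(d,c,b,a)$ has the larger leading norm, using the symmetry $f(a,b,c,d)=f(d,c,b,a)$) shows $0<f(a,b,c,d)\leq 1$ for every term. Consequently $f^s\geq f^{1/2}$ whenever $s\leq 1/2$, so $F(s)\geq F(1/2)$ termwise, and it suffices to prove that $F(1/2)$ diverges.

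Next I would restrict the sum to a one-parameter subfamily whose defects decay like $n^{-2}$, so that their square roots form a harmonic-type series. A convenient choice is $(a,b,c,d)=(1,0,n,1)$ for $n\geq 1$, which satisfies $ad-bc=1$. Here $||a,b||=1$, $||c,d||=\sqrt{n^2+1}$, $||a+c,b+d||=\sqrt{(n+1)^2+1}$, and $ac+bd=n$. Feeding these into the exact expression of Lemma~\ref{eq_fEst} and bounding each factor crudely from above, namely $1+\sqrt{n^2+1}+\sqrt{(n+1)^2+1}\leq 6n$ and $\sqrt{n^2+1}+n\leq 3n$ for $n\geq 1$, gives a denominator at most $18n^2$, hence
$$f(1,0,n,1)\geq \frac{1}{9n^2},\qquad f(1,0,n,1)^{1/2}\geq \frac{1}{3n}.$$

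Finally, since all terms of $F(1/2)$ are nonnegative, discarding everything except this subfamily yields
$$F(1/2)\geq \sum_{n\geq 1} f(1,0,n,1)^{1/2}\geq \sum_{n\geq 1}\frac{1}{3n}=+\infty,$$
which proves divergence at $s=1/2$ and, by the first paragraph, for all $s\leq 1/2$.

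I do not expect a serious obstacle here; the only point requiring care is selecting a subfamily along which $f$ genuinely behaves like $n^{-2}$ rather than decaying faster (a subfamily with $f\sim n^{-3}$ would produce a convergent series of square roots and prove nothing). The choice $(1,0,n,1)$ works precisely because one vector stays bounded while the other marches off along a fixed primitive direction, keeping both factors in the denominator of order $n$ and the product of order $n^2$; verifying the factor-by-factor bounds is then routine.
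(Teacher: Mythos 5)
Your proof is correct and takes essentially the same route as the paper: both restrict to the one-parameter subfamily $(1,0,n,1)$ and use the exact formula of Lemma~\ref{eq_fEst} to bound $f(1,0,n,1)\geq \mathrm{const}\cdot n^{-2}$ (the paper gets $\frac{2}{15n^2}$, you get $\frac{1}{9n^2}$), whence the harmonic-type divergence for $s\leq 1/2$. Your preliminary reduction to $s=1/2$ via $0<f\leq 1$ is a small extra care that the paper leaves implicit, not a different method.
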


\begin{proof}
It is enough to prove that $\sum_{k=1}^{\infty} f(1,0,k,1)^s\geq const\cdot\sum 1/k^{2s}$.
Indeed, by  \eqref{eq_fEstimation} we see that $f(1,0,k,1)\geq \frac{2}{5k\cdot 3k}$.

%
%
%
%
%
\end{proof}

{\bf Fedor Petrov's comment:} It is easy to prove that $F(s)$ converges for $s>2/3$ and diverges for $s\leq 2/3$. Indeed, for two vectors $x=(a,b),y=(c,d)$ we have that $f(a,b,c,s)=f(x,y)$ is of order $\frac{1}{|x||y|^2}$ if $|x|<|y|$. Therefore,
$$\sum f(x,y)^s\sim \sum_x \frac{1}{|x|^s}\sum_{k=0}^\infty \frac{1}{|y_0+kx|^{2s}}\sim$$
$$\sim  \sum_x \frac{1}{|x|^s}\sum_k \frac{1}{|x|^{2s}}\frac{1}{k^{2s}}\sim \sum_x\frac{1}{|x|^{3s}}\sim \sum_n r_2(n)n^{-\frac{3}{2}s},$$
and the latter converges when $\frac{3}{2}s>1$ and diverges when $\frac{3}{2}s\leq 1$.

\begin{remark}
Computer experiments show that $F(s)$ can be analytically extended to complex $s$ with $\mathrm{Re}\  s< 1/2$, but we do not know how to prove it, due to quick growth of derivatives. For example, the values of $F$ and its seven derivatives at $s=1/2+3i$ equal (appoximately)
$$-1.34-0.88i,34.4+9.8i,-839-186i,20653+3430i,-513272-58439i,$$ 
$$1.2\cdot 10^7+8.3\cdot 10^5i,-3.2\cdot10^8-6.3\cdot 10^6i,8.2\cdot10^9-1.9\cdot10^8i.$$
\end{remark}

\section{Integral affine invariants of convex domains}

Below we give a conceptual explanation for the described exhaustion method. To a compact convex domain $\Omega$ we associate a tree $C_\Omega\subset\Omega$ having a canonical structure of a rational tropical analytic curve. This curve describes $\Omega$ completely: any invariant of convex domains depends on the lengths of edges of a corresponding curve, i.e. becomes a function of its moduli. The moduli describe the sizes of triangles that we cut out in the exhaustion, see Figure \ref{fig_blowdownc}. We give general formulas for area, perimeter and lattice perimeter in terms of $C_\Omega$ (section \ref{sec_uniformls}). The summations presented in the previous section are, therefore, seen as applications of these formulas to some simple shapes.

The tropical curve $C_\Omega$ is given by the tropical series $F_\Omega,$ which is the lattice distance to $\partial\Omega.$ Its non-zero level sets $\Omega_t=F_\Omega^{-1}[t,\infty)$ are convex polygons with rational slopes of edges (we call them $\QQ$-polygons), we may think of these level sets as of propagation of wave front, and $C_\Omega$ becomes the tropical analog of its caustic. 

Sending $\Omega$ to $\Omega_t, t>0,$ defines the canonical evolution (see \cite{misha_thesis}) on the space of convex domains. It pushes general convex domains to $\QQ$-polygons and, more specifically, to Delzant polygons in case when $\Omega$ is already Delzant, see Theorem \ref{thm_deltedlz}, or if $\partial\Omega$ has no corners, see Theorem \ref{thm_omegldelz}. 

Recall that a $\QQ$-polygon $\Delta$ is called Delzant if for any pair of adjacent sides primitive vectors in their directions form a basis in $\ZZ^2.$ These polygons, up to translations and $SL(2,\ZZ),$ are in one-to-one correspondence with compact smooth symplectic toric surfaces, see \cite{MR984900}. To a smooth surface $X$ one associates $\Delta=\mu(X),$ where $\mu\colon X\rightarrow \RR^2$ is the moment map of the Hamiltonian torus action on $X$ (see \cite{MR1853077}). The image $\Delta$ is a Delzant polygon and its vertices are the images of fixed points of the torus action. The intuition behind the exhaustion is that cutting a corner of $\Delta$ corresponds to a symplectic blow-up of $X$ at the fixed point of the torus action. In particular, the exhaustion (as well as the canonical evolution) gives a way to define a toric surface corresponding to a general compact convex domain in the category of Hamiltonian pro-manifolds.

\subsection{Tropical caustic of Delzant polygon}
\label{sec_curvepol}

First we give a mechanical description of the tropical curve $C_\Delta$ in the case of Delzant polygon $\Delta.$ We are going to introduce a finite system of particles in the polygon and, as a set, $C_\Delta$ will be the trajectory traced by these particles. In the initial position, we set a particle at every vertex of $\Delta$ and send it towards the interior with the velocity vector equal to the sum of the primitive vectors spanning the sides adjacent to the vertex. Every particle moves rectilinearly until some group of particles collides. For example, in case of $\Delta$ equal to a square with horizontal and vertical sides all four particles meet at the center. Similarly, if $\Delta$ is a Delzant triangle all three particles meet at the same time (see Figure \ref{fig_sqtr}) and the tropical curve $C_\Delta$ has a unique vertex at the point of collision, i.e. the tropical curve is {\it unbranched}. We note that in both cases the polygons correspond to Fano toric surfaces $X=\CC P^1\times\CC P^1$ or $\CC P^2$ with a multiple of anti-canonical polarisation. This is true in general, $C_\Delta$ is has a unique vertex if and only if Delzant polygon $\Delta$ is proportional to a lattice polygon with a unique lattice point in the interior. Indeed, by putting the origin to the vertex of $C_\Delta$ and rescaling $\Delta$ by the inverse of the total time of the evolution we get such a lattice polygon. The total time of the rescaled polygon is equal to $1$ and the initial positions of particles are opposite to their velocities. 

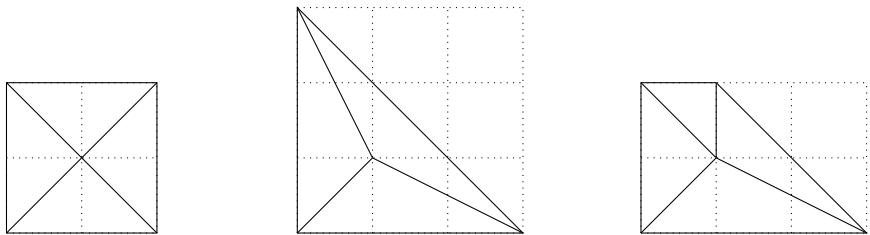
\begin{figure}[h]
\begin{tikzpicture}

\draw[black,step = 1.0,thin, dotted](0,0) grid (2,2);
\draw(0,0)--++(0,2)--++(2,0)--++(0,-2)--++(-2,0);
\draw(0,0)--++(1,1);
\draw(1,1)--++(1,1);
\draw(1,1)--++(-1,1);
\draw(1,1)--++(1,-1);

\begin{scope}[xshift=110]
\draw[black,step = 1.0,thin, dotted](0,0) grid (3,3);
\draw(0,0)--++(0,3)--++(3,-3)--++(-3,0);
\draw(1,1)--++(-1,-1);
\draw(1,1)--++(-1,2);
\draw(1,1)--++(2,-1);
\end{scope}

\begin{scope}[xshift=240]
\draw[black,step = 1.0,thin, dotted](0,0) grid (3,2);
\draw(0,0)--++(3,0)--++(-2,2)--++(-1,0)--++(0,-2);
\draw(1,1)--++(-1,-1);
\draw(1,1)--++(-1,1);
\draw(1,1)--++(2,-1);
\draw(1,1)--++(0,1);
\end{scope}

\end{tikzpicture}
\caption{Some Delzant polygons with unbranched tropical curve.}

\label{fig_sqtr}
\end{figure}

If we perform a blowup of $\CC P^2$ at one of the three fixed points of the torus action, this corresponds to cutting a corner of its moment triangle. Note that the size of the cut may vary which corresponds to a different choice of symplectic structure on the blowup. If the integral of the new symplectic form along the exceptional divisor is small enough, the particles corresponding to the new corners will collide first forming a new particle with the velocity equal to the sum of velocities of collided particles. Note that this velocity is the same as of the particle which would be emitted from the blown-up corner. Moreover, after the collision the three remaining particles behave in the same way as if they would be simply emitted from the corners of the triangle without the blowup. In particular, they meet at the same terminal point. In this case the tropical curve has two vertices (see Figure \ref{fig_trbl}). 

While the cut gets bigger, eventually all the four initial particles will be again meeting at the same time (see Figure \ref{fig_sqtr} on the right) and the curve has one vertex which is terminal for the process. Larger cut produces trapezium in which the particles are meeting in pairs. After the two collisions (apparently happening at the same time and at the same altitude) two particles going horizontally towards each other. They collide at the terminal point (see Figure \ref{fig_trbl}).

\begin{figure}[h]
\begin{tikzpicture}
\draw[black,step = 1.0,thin, dotted](0,0) grid (4,3);
\draw(0,0)--++(0,3)--++(1,0)--++(3,-3)--++(-4,0);
\draw(0,0)--++(1.333,1.333);
\draw(1.333,1.333)node{$\bullet$};
\draw(4,0)--++(-2.666,1.333);
\draw(1.333,1.333)--++(-0.333,0.666)--++(0,1);
\draw(0,3)--++(1,-1);

\begin{scope}[xshift=200]
\draw[black,step = 1.0,thin, dotted](0,0) grid (4,2);
\draw(0,0)--++(0,2)--++(2,0)--++(2,-2)--++(-4,0);
\draw(0,0)--++(1,1)--++(-1,1);
\draw(4,0)--++(-2,1)--++(-1,0);
\draw(2,2)--++(0,-1);
\draw(1.5,1.3)node{$2$};
\draw(1.5,1)node{$\bullet$};
\end{scope}

\end{tikzpicture}

\caption{Different symplectic structures on a Hirzebruch surface produce different curves on its moment polygon. Note that the terminal point (marked by $\bullet$) of the polygons is not integral (although the polygons are lattice) and the curve on the right has an edge of multiplicity two. The terminal point on the left is the lower vertex and on the right it is the middle of the multiplicity two segment. Note that in both pictures there is a pair of sides of $\Delta$ and a pair of edges of $C_\Omega$ which are parallel (see Remark \ref{rem_exppar})} 
\label{fig_trbl}
\end{figure}
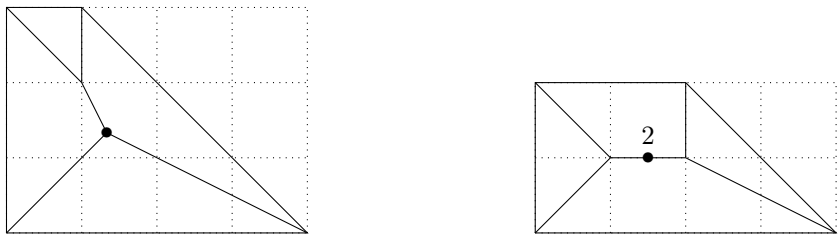

The rule of particle collision requires a minor clarification. In order to do that, we need to introduce masses of particles which are positive integers. The initial masses of particls are all taken to be one and set to the vertices of $\Delta.$ A momentum of a particle is its mass times velocity and velocity is postulated to be primitive for all existing particles.  In the case when a group of particles collide they give a new particle such that the momentum is preserved. For example, in Figure \ref{fig_trbl} on the right we see a pair of mass two particles annihilating each other at the terminal point of the mechanical process. The conservation of momentum is also known as the balancing condition in tropical geometry, see Figure \ref{fig_balancing}.

\begin{lemma}
For any Delzant polygon $\Delta$ all the particles stay inside $\Delta^\circ$ and there exists a a unique (terminal) time at which a group of particles annihilate (without emission of a new particle) each other at the terminal point $p_\Delta.$ 
\label{lem_termination} 
\end{lemma}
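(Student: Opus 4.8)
The plan is to run the whole argument through the tropical function $F_\Delta(x)=\min_\ell\big(c_\ell-\langle n_\ell,x\rangle\big)$, the lattice distance to $\partial\Delta$, where $\Delta=\{x:\langle n_\ell,x\rangle\le c_\ell\}$ and the $n_\ell$ are the primitive inward conormals. This $F_\Delta$ is concave and continuous, vanishes on $\partial\Delta$, is positive on $\Delta^\circ$, and attains a maximum $M_\Delta>0$ on the compact set $\Delta$. My first step is to identify the mechanical process with the evolution of $\Omega_t=\{F_\Delta\ge t\}$: a fresh vertex-particle sitting on edges $i,j$ has velocity $v$ determined by $\langle n_i,v\rangle=\langle n_j,v\rangle=-1$ (both defining constants drop at unit lattice speed), and a direct check shows this $v$ equals the sum of the two primitive edge vectors and is primitive because those vectors form a $\ZZ^2$-basis (Delzant). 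Hence, while a particle is a genuine vertex of $\Omega_t$ it satisfies $F_\Delta=t$ and $\tfrac{d}{d\tau}F_\Delta=1$ along its trajectory.

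Containment in $\Delta^\circ$ then becomes almost immediate: along every trajectory $F_\Delta$ is nondecreasing — it increases at unit rate while the particle is a fresh vertex and stays constant (equal to $M_\Delta$) once the particle reaches the top level set — so $F_\Delta>0$ for all $\tau>0$ and no particle ever returns to $\partial\Delta$. For termination I would first argue combinatorial finiteness: the vertices of $\Omega_t$ move linearly in $t$ between events, each edge length is piecewise linear, and a combinatorial change occurs exactly when an edge shrinks to zero, i.e. when the two particles bounding it collide; there are only finitely many such events on $[0,M_\Delta]$. I would then verify that the velocity assigned to the merged particle by momentum conservation coincides with the velocity of the new vertex $i\cap k$ (namely $e_i+e_k$ when those edges are not parallel), so that the mechanical rule and the level-set evolution genuinely agree.

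The two conservation facts are the conceptual heart. First, the total momentum $\sum_V v_V$ of the initial configuration vanishes: regrouping the sum by edges, each edge contributes its primitive direction at one endpoint and the opposite vector at the other, so everything telescopes to $0$. Since each collision preserves total momentum, the net momentum of the surviving particles is identically $0$; in particular the final group must annihilate with no emitted particle, for a nonzero residual momentum would force a surviving particle and hence a nonempty level set past the terminal time. Second, the argmax $K=\{F_\Delta=M_\Delta\}$ is convex and cannot be two-dimensional — were $F_\Delta$ constant on an open set, one affine piece $c_\ell-\langle n_\ell,\cdot\rangle$ would be constant, impossible since $n_\ell\neq 0$ — so $K$ is a point or a segment. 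When $K$ is a point, all surviving particles converge to it and annihilate there, yielding a unique terminal time and the terminal point $p_\Delta$.

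The hard part will be the case when $K$ is a genuine segment — the multiplicity-two situation of Figure~\ref{fig_trbl}, and of a long rectangle, where the two edges across the vanished edge become parallel so that the new particle is born moving tangent to $K$. Here the level-set parameter saturates at $M_\Delta$ while the mechanical time keeps running, and the few remaining particles slide along the straight ridge $K$, colliding in pairs. I would show that momentum balance forces these ridge particles toward one another and that the last collision happens at a single point, which I take to be $p_\Delta$, with the terminal time being precisely this final annihilation time (strictly larger than all earlier events). Proving rigorously that the ridge dynamics funnels everything into one terminal point — rather than leaving several simultaneous annihilations at distinct points, which convexity of $K$ together with the zero-momentum bookkeeping should exclude — is the main technical obstacle, and I expect it to require careful tracking of the masses and velocities of the particles that reach $K$.
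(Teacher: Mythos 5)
Your overall route---identifying the particle flow with the level-set evolution $\Delta_\e=F_\Delta^{-1}[\e,\infty)$ of the lattice-distance function---is exactly the paper's (this is Remark \ref{rem_initdist} plus Proposition \ref{prop_fdefc}), and your telescoping observation that the total initial momentum vanishes is a genuinely nice addition that the paper leaves implicit. But the proposal has a real gap, and you locate it yourself. The step you defer with ``I would then verify'' --- that at every non-terminal collision exactly two particles merge and the momentum rule reproduces the velocity of the new vertex of the level set --- is not a routine check: you must exclude that three or more edges collapse at one non-maximal point, and that the merged momentum $v_1+v_2$ fails to be primitive (which would make the postulated-primitive-velocity rule inconsistent with momentum conservation). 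In the paper this is precisely the content of Theorem \ref{thm_deltedlz} ($\Delta_\e$ stays Delzant for $0\leq\e<\mathbf{m}_\Delta$), proved by the three-configuration analysis of the sides adjacent to a collapsing side (Figure \ref{fig_collint}): parallel adjacent sides force the collision to the maximum (Lemma \ref{lem_parlmax}); in the second configuration a gradient argument again forces the maximum; and in the remaining configuration a lattice-area-one argument yields \eqref{eq_sumrem}, i.e.\ the side is removable and the collision follows the unique blow-down model of Figure \ref{fig_blowdownc}; Lemma \ref{lem_moreonecol} then excludes simultaneous collapses at a non-maximal vertex. Without this, your ``combinatorial finiteness plus momentum bookkeeping'' cannot even be iterated, since after one collision you do not yet know the survivors are again vertices of a polygon evolving by the same rule.

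The second gap is the ridge case, which you explicitly concede as ``the main technical obstacle.'' The paper closes it as a corollary of the same theorem rather than by tracking ridge dynamics: since $\Delta_\e$ is Delzant for all $\e<\mathbf{m}_\Delta$ and collapses onto the segment $K=F_\Delta^{-1}(\mathbf{m}_\Delta)$, near the end the polygon has exactly one pair of parallel sides whose conormals are primitive and opposite; hence at the single time $\mathbf{m}_\Delta$ all surviving particles reach the two endpoints of $K$ simultaneously, producing exactly two particles, each of mass two, with opposite primitive velocities along $K$. These have total momentum zero, move toward each other, and annihilate at the midpoint of $K$ at time $\mathbf{m}_\Delta+\tfrac12\Length_\ZZ K$, which gives both uniqueness of the terminal time and of the terminal point $p_\Delta$. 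Your feared scenario of several simultaneous annihilations at distinct points is thereby excluded structurally (each end-group carries nonzero net momentum $\pm 2u$, so no proper subgroup can annihilate early), not by the ``careful tracking of masses and velocities'' you anticipate. So the missing ingredient in your sketch is the Delzant persistence theorem; once it is in place, both of your gaps close simultaneously and the lemma becomes, as the paper puts it, evident.
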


In particular, the trajectory of all particles $C_\Delta$ is a weighted finite planar graph with rational slopes. The one-valent vertices of $C_\Delta$ are the vertices of $\Delta.$ The weights (or multiplicities) on edges correspond to the masses of particles tracing them. We note that the moment preservation is equivalent to the balancing condition shown on Figure \ref{fig_balancing}. This turns $C_\Delta$ in to a tropical curve. An edge of a tropical curve is called multiple if its weight is greater than two. One can give a full description of multiple edges in the Delzant case.

 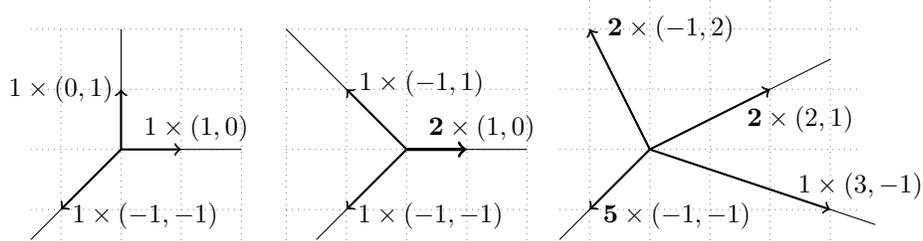
\begin{figure}[h]
    \centering

\begin{tikzpicture}[scale=0.4]
\draw(0,0)--++(4,0);
\draw(0,0)--++(0,4);
\draw(0,0)--++(-3,-3);
\draw[->][thick](0,0)--++(2,0);
\draw[->][thick](0,0)--++(0,2);
\draw[->][thick](0,0)--++(-2,-2);
\draw(2.5,0.7)node{$1\times (1,0)$};
\draw(-2,2)node{$1\times (0,1)$};
\draw(0.8,-2.2)node{$1\times (-1,-1)$};
\draw[black,step = 2.0,very thin, dotted](-3,-3) grid (4,4);

\begin{scope}[xshift=270]
\draw(0,0)--++(-4,4);
\draw(0,0)--++(-3,-3);
\draw(0,0)--++(4,0);
\draw[->][very thick](0,0)--++(2,0);
\draw[->][thick](0,0)--++(-2,2);
\draw[->][thick](0,0)--++(-2,-2);
\draw(2.5,0.7)node{${\bf 2}\times (1,0)$};
\draw(0.5,2.2)node{$1\times (-1,1)$};
\draw(0.8,-2.2)node{$1\times (-1,-1)$};
\draw[black,step = 2.0,very thin, dotted](-4,-3) grid (4,4);
\end{scope}

\begin{scope}[xshift=500]
\draw(0,0)--++(-3,-3);
\draw(0,0)--++(-2,4);
\draw(0,0)--++(6,3);
\draw(0,0)--++(7.5,-2.5);
\draw[->][thick](0,0)--++(-2,-2);
\draw[->][thick](0,0)--++(-2,4);
\draw[->][thick](0,0)--++(4,2);
\draw[->][thick](0,0)--++(6,-2);
\draw(0.9,-2.2)node{${\bf 5}\times (-1,-1)$};
\draw(7,-1.2)node{$1\times (3,-1)$};
\draw(5,1)node{${\bf 2}\times (2,1)$};
\draw(0.7,4)node{${\bf 2}\times (-1,2)$};
\draw[black,step = 2.0,very thin, dotted](-3,-3) grid (7,5);
\end{scope}

\end{tikzpicture}
\caption{Examples of balancing condition in local pictures of tropical curves near vertices. The notation ${\bf w}\times (p,q)$ means that the corresponding edge has the weight ${\bf w}$ and the primitive vector $(p,q).$ The vertex on the left picture is {\it smooth} (i.e. has multiplicity one), the vertices in the middle and right pictures are not smooth having multiplicities two and forty, see Figure \ref{fig_balancingdual}}
\label{fig_balancing}

\end{figure}

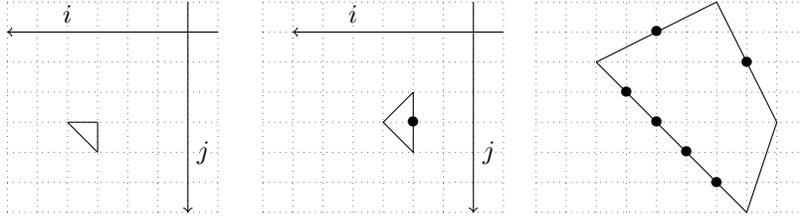
\begin{figure}[h]
    \centering

\begin{tikzpicture}[scale=0.4]
\draw(4,3)[->]--++(-7,0);
\draw(3,4)[->]--++(0,-7);
\draw (-1,3) node[above] {$i$};
\draw (3,-1) node[right]{$j$};

\draw(0,0)--++(-1,0);
\draw(-1,0)--++(1,-1);
\draw(0,-1)--++(0,1);

\draw[black,step = 1.0,very thin, dotted](-3,-3) grid (4,4);

\begin{scope}[xshift=270]

\draw(4,3)[->]--++(-7,0);
\draw(3,4)[->]--++(0,-7);
\draw (-1,3) node[above] {$i$};
\draw (3,-1) node[right]{$j$};

\draw(1,1)--++(-1,-1)--++(1,-1)--++(0,2);

\draw(1,0)node{$\bullet$};
\draw[black,step = 1.0,very thin, dotted](-4,-3) grid (4,4);
\end{scope}

\begin{scope}[xshift=500]

\draw(-1,2)--++(5,-5)--++(1,3)--++(-2,4)--++(-4,-2);
\draw[black,step = 1.0,very thin, dotted](-3,-3) grid (6,4);
\draw(0,1)node{$\bullet$};
\draw(1,0)node{$\bullet$};
\draw(2,-1)node{$\bullet$};
\draw(3,-2)node{$\bullet$};

\draw(1,3)node{$\bullet$};

\draw(4,2)node{$\bullet$};
\end{scope}

\end{tikzpicture}
\caption{Polygons dual to local models of tropical curves on Figure \ref{fig_balancing}. A dual polygon is defined up to a translation, its lattice points $(i,j)$ correspond to the monomials $ix+jy+a_{ij}$ (in some polynomial defining a curve) which contribute to the value of the tropical polynomial at the vertex. Note that we need to reverse coordinate axes because of the ``inf'' (instead of more conventional here ``sup'') agreement  in \eqref{eq_fomega}, in the definition of the tropical curve. Sides of polygons are orthogonal to the edges of curves. Moreover an integral length of a side, computed as one plus the number of lattice points in its interior, is the weight of the corresponding dual edge. The areas of the polygons are $1/2$, $1$ and $20$, thus the multiplicities of the dual edges are $1,$ $2$ and $40.$}
\label{fig_balancingdual}
\end{figure}

\begin{proposition}
If the terminal point $p_\Delta$ is a vertex of $C_\Delta$ for Delzant polygon $\Delta$ then $C_\Delta$ has no multiple edges. If the terminal point is not a vertex then it belongs to the middle of the {\it last} edge $last_\Delta$ appearing in the building of $C_\Delta,$ the last edge $last_\Delta$ has multiplicity $2$ and it is the only multiple edge of $C_\Delta.$ The vertices that are not $p_\Delta$ or ends of $last_\Delta$ are $3$-valent and smooth (see Figures \ref{fig_balancing} and \ref{fig_balancingdual}).   
\label{prop_lastedge} 
\end{proposition}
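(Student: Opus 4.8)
The plan is to track the mechanical evolution $\Delta\rightsquigarrow\Delta_t$ of Lemma~\ref{lem_termination} through a single integer invariant attached to each edge, and to show that every collision is ``generic'' — binary, trivalent and smooth — until the terminal time, at which exactly one of two degenerations occurs. For an edge $e$ of a polygon that is smooth along $e$, write $n_{\mathrm{prev}},n_e,n_{\mathrm{next}}$ for the inward primitive normals of $e$ and its two neighbours; smoothness gives $\det(n_{\mathrm{prev}},n_e)=\det(n_e,n_{\mathrm{next}})=1$, so there is an integer $a_e$ with $n_{\mathrm{prev}}+n_{\mathrm{next}}=a_e\,n_e$ (minus the self-intersection of the corresponding toric divisor). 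First I would record the kinematics: since every edge-line recedes inward at unit lattice speed, a direct computation in a frame where $n_e=(0,1)$ shows the lattice length of $e$ evolves linearly, $L_e(t)=L_e(0)+(a_e-2)t$. Hence $e$ shrinks exactly when $a_e\le 1$, at rate $2-a_e$ (matching bottom/sides/top rates $3,2,1$ for the trapezoid of Figure~\ref{fig_trbl}).

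Next I would analyse the collapse of a single edge $e$ with $a_e=1$. From $n_{\mathrm{prev}}+n_{\mathrm{next}}=a_e n_e$ one gets $\det(n_{\mathrm{prev}},n_{\mathrm{next}})=a_e=1$, so when $e$ disappears its two mass-one endpoint particles merge into a \emph{smooth} new vertex; conservation of momentum gives outgoing mass $\gcd(a_e,2)=1$. Thus a generic collision keeps $\Delta_t$ Delzant and contributes a trivalent smooth vertex to $C_\Delta$ carrying a mass-one edge — in the toric dictionary, the blow-down of a $(-1)$-curve. Because the normals wrap once around the origin, the polygon always has an edge with $a_e\le 1$, so the evolution never stalls and Lemma~\ref{lem_termination} applies.

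The heart of the argument is that the non-generic collisions are confined to the terminal time $t_\Delta$. An edge with $a_e=0$ has antiparallel neighbours $n_{\mathrm{prev}}=-n_{\mathrm{next}}$; at the instant it collapses the two neighbouring edge-lines coincide, and convexity forces $\Delta_t$ to drop to dimension one. An edge with $a_e\le -1$ has neighbours spanning more than a half-turn ($\det(n_{\mathrm{prev}},n_{\mathrm{next}})=a_e<0$), so its endpoints cannot form a convex vertex; such an edge can vanish only as part of a global collapse (as for the homothetically shrinking Delzant triangle, where all three $a_e=-1$ edges meet at $t_\Delta$). Consequently, while $\Delta_t$ is two-dimensional it stays Delzant and \emph{every} collision strictly before $t_\Delta$ is generic, which already establishes that all vertices other than $p_\Delta$ and the ends of $last_\Delta$ are trivalent and smooth.

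Finally I would run the termination dichotomy. As $t\uparrow t_\Delta$ the shrinking polygon tends to a convex set of dimension $0$ or $1$. If it is a point, the surviving mass-one particles meet there with total momentum zero (global balancing) and annihilate: $p_\Delta$ is a vertex of $C_\Delta$ and all edges are mass one, so there are no multiple edges. If it is a segment $\sigma$, then just before $t_\Delta$ the polygon is a Delzant sliver with two antiparallel long sides; the last cross-edge at each end has $a_e=0$, so by the local computation each collapses into a mass-\emph{two} particle, and these two particles, of equal mass by momentum conservation, run along $\sigma$ and annihilate at its midpoint. Hence $last_\Delta=\sigma$ is the unique multiple edge, of multiplicity exactly two, with $p_\Delta$ at its center and the two (trivalent, non-smooth) $a_e=0$ vertices at its ends. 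I expect the delicate point to be precisely this termination analysis: proving rigorously that $a_e\le 0$ and higher-order collapses occur only at $t_\Delta$, that the final sliver reduces at each end to a \emph{single} $a_e=0$ cross-edge — so no cascade to mass $\ge 3$ is possible — and handling non-generic simultaneous collisions (e.g.\ two adjacent $(-1)$-edges vanishing at once), presumably via a genericity/continuity reduction.
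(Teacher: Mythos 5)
Your route is genuinely different from the paper's: you run the particle dynamics directly through the integer $a_e$ defined by $n_{\mathrm{prev}}+n_{\mathrm{next}}=a_e n_e$ and the linear law $L_e(t)=L_e(0)+(a_e-2)t$, whereas the paper deduces the proposition from its level-set machinery — Theorem \ref{thm_deltedlz} (via Lemmas \ref{lem_parlmax} and \ref{lem_moreonecol}) and Proposition \ref{prop_fdefc} — observing that $last_\Delta=F_\Delta^{-1}({\bold m}_\Delta)$, that a non-maximal collision has the unique blow-down model of Figure \ref{fig_blowdownc}, and that the multiplicity is $2$ because the two faces adjacent to the maximal segment carry primitive \emph{opposite} gradients. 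The parts of your argument that are carried out are correct: the length law, the mass formula $\gcd(a_e,2)$ from momentum conservation, and the degeneration claim for $a_e\le 0$ (which can be made rigorous in one line: at the collapse point $p$ the constraints $n_{\mathrm{prev}}\cdot z\ge n_{\mathrm{prev}}\cdot p$ and $n_{\mathrm{next}}\cdot z\ge n_{\mathrm{next}}\cdot p$ sum to $a_e\,n_e\cdot z\ge a_e\,n_e\cdot p$ with $a_e\le 0$, while $n_e\cdot z\ge n_e\cdot p$ also holds, forcing $\Delta_t$ into the line $n_e\cdot z=n_e\cdot p$).

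However, two steps you flag as ``delicate'' are genuine gaps, and one of them is based on a false claim. First, the trivalency/smoothness of non-terminal vertices requires excluding two \emph{adjacent} edges with $a_{e_1},a_{e_2}\le 1$ collapsing simultaneously at the same non-maximal point; ``genericity/continuity'' is not a proof here since the statement concerns every Delzant polygon and perturbing edge lengths changes the tree. This is exactly what the paper's Lemma \ref{lem_moreonecol} is for; in your own framework it closes cleanly by computing, for the outer normals $n_0,n_3$ flanking the pair, $\det(n_0,n_3)=a_{e_1}a_{e_2}-1\le 0$, whence the tight-constraint argument above again forces degeneration, i.e.\ such a collision is terminal. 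Second, your terminal analysis asserts the final sliver has a single $a_e=0$ cross-edge at each end, with trivalent end vertices — this is false. Take the Delzant pentagon with vertices $(0,0),(N,0),(N{+}1,1),(N{+}1,2),(0,2)$, $N\ge 2$: it degenerates at $t=1$ to the segment from $(1,1)$ to $(N,1)$, and at the right end the two cross-edges (normals $(-1,1)$ and $(-1,0)$, each with $a_e=1$) collapse simultaneously with the degeneration; three unit particles merge at $(N,1)$ with total momentum $(0,1)+(-1,0)+(-1,-1)=2\,(-1,0)$, so the end of $last_\Delta$ is a $4$-valent vertex (this is why the proposition exempts the ends of $last_\Delta$, cf.\ the seeds of Figure \ref{fig_seeds}). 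Consequently ``mass exactly $2$'' cannot come from your local $\gcd(a_e,2)$ computation at an $a_e=0$ edge; the robust reason, which is the paper's, is that the two faces adjacent to $last_\Delta$ have primitive opposite gradients $\pm n$, so the weight of the edge is the lattice length of $2n$, namely $2$; momentum conservation then gives equal simultaneous emissions from the two ends and hence annihilation at the midpoint.
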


The curve $C_\Delta$ contains a lot of geometric information. At the most basic level we have.

\begin{proposition}
Consider a compact smooth symplectic toric manifold $X$ and an irreducible boundary divisor $D.$ Then to this divisor one associates a side $s=\mu(D)$ of the Delzant polygon $\Delta=\mu(X).$ Let $s_1$ and $s_2$ be the edges of $C_\Delta$ coming out of the ends of $s.$ Then $s$ together with continuations of $s_1$ and $s_2$ form a Delzant triangle iff the self-intersection of $D$ in $X$ is $-1$. 
\label{prop_eblowdun}
 \end{proposition}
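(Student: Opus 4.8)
The plan is to reduce everything to a local computation near the side $s$ after normalizing by an element of $SL(2,\mathbb{Z})$ and a translation. First I would place the endpoints of $s$ at $V_1=(0,0)$ and $V_2=(m,0)$ with $\Delta$ lying in the upper half-plane, so that the primitive direction of $s$ is $(1,0)$. Since $\Delta$ is Delzant, the other edge at $V_1$ has primitive inward direction $u_1=(a_1,1)$ and the other edge at $V_2$ has primitive inward direction $u_2=(b_1,1)$ for some integers $a_1,b_1$; the second coordinates are forced to equal $1$ by the determinant-one condition with $(1,0)$.

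Next I would read off the two emitted velocities defining $s_1$ and $s_2$. By the construction of $C_\Delta$, the particle leaving $V_1$ has velocity $v_1=(1,0)+u_1=(a_1+1,1)$ and the one leaving $V_2$ has velocity $v_2=(-1,0)+u_2=(b_1-1,1)$; both are automatically primitive. Thus $s_1,s_2$ are the initial edges from $V_1,V_2$ in directions $v_1,v_2$. At $V_1$ the triangle's two edges have primitive directions $(1,0)$ and $v_1$, with $\det((1,0),v_1)=1$; at $V_2$ they are $(-1,0)$ and $v_2$, with determinant $-1$; so the triangle is automatically Delzant at $V_1$ and $V_2$. Its apex $P$ is where the continuations of $s_1,s_2$ meet, lying on the same side as $\Delta$ exactly when $\det(v_1,v_2)=(a_1+1)-(b_1-1)=a_1-b_1+2>0$. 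At $P$ the two edge directions are $-v_1,-v_2$, so Delzant-ness there is equivalent to $|\det(v_1,v_2)|=|a_1-b_1+2|=1$. Combined with $P$ lying on the $\Delta$-side, the triangle is Delzant iff $a_1-b_1+2=1$, i.e. $a_1-b_1=-1$.

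Then I would identify $a_1-b_1$ with the self-intersection. Traversing $\partial\Delta$ counterclockwise, the primitive edge directions immediately before and after $s$ are $d_{\mathrm{prev}}=-u_1=(-a_1,-1)$ and $d_{\mathrm{next}}=u_2=(b_1,1)$, whence
$$d_{\mathrm{prev}}+d_{\mathrm{next}}=(b_1-a_1,0)=(b_1-a_1)\,(1,0).$$
By the standard relation on a smooth complete toric surface, $d_{\mathrm{prev}}+d_{\mathrm{next}}=-D^2\,(1,0)$, so $D^2=a_1-b_1$. (The sign is fixed by two sanity checks: on $\mathbb{CP}^2$ each line has $D^2=+1$, and on a one-point blow-up the exceptional curve has $D^2=-1$.) Substituting into the criterion of the previous paragraph, the triangle is Delzant iff $D^2=a_1-b_1=-1$, which is exactly the claim.

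I expect the only real obstacle to be bookkeeping of conventions rather than any substantive difficulty: one must check that the emitted velocities are primitive so that $s_1,s_2$ genuinely are the first edges of $C_\Delta$, fix the orientation so that $P$ is taken on the side into which $s_1,s_2$ propagate (otherwise $|a_1-b_1+2|=1$ admits the spurious solution $a_1-b_1=-3$, corresponding to the triangle on the far side of $s$), and pin down the sign in the self-intersection relation. Once the coordinates are normalized as above, each of these reduces to a single determinant check.
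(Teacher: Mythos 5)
Your proof is correct, and it takes a genuinely different route from the paper's. The paper argues geometrically in both directions: when $D$ is exceptional it asserts that the corner-cut configuration of Figure~\ref{fig_blowdownc} is the only possibility, and conversely, given the Delzant triangle it deduces from the particle construction that the primitive vectors $w_1,w_2$ of the sides adjacent to $s$ satisfy $w_1=v_2$ and $w_2=v_1$, so the prolongations of those sides meet at a point $p_0$; setting $\tilde\Delta=\operatorname{ConvexHull}(\Delta\cup\{p_0\})$ realizes $X$ as a symplectic blow-up with exceptional divisor $D$, whence $D^2=-1$. You instead normalize by $SL(2,\ZZ)$ so that $s$ runs from $(0,0)$ to $(m,0)$, compute the emitted velocities $v_1=(a_1+1,1)$, $v_2=(b_1-1,1)$, reduce Delzant-ness of the triangle to $\det(v_1,v_2)=a_1-b_1+2=1$ (the base vertices being automatically unimodular), and convert $a_1-b_1$ into $D^2$ via the relation $d_{\mathrm{prev}}+d_{\mathrm{next}}=-D^2\,d_s$ on primitive edge directions, which is the usual fan relation $u_{i-1}+u_{i+1}=-D_i^2\,u_i$ after a $90^\circ$ rotation; your sign checks against $\CC P^2$ and the one-point blow-up pin this down correctly. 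Your computation buys two things the paper's terse proof leaves implicit: a single arithmetic equivalence covering both directions of the iff at once, and an explicit exclusion of the spurious case $\det(v_1,v_2)=-1$ (i.e.\ $D^2=-3$), where the three lines still bound a lattice triangle with unimodular corners but on the far side of $s$ --- the paper's blow-down argument silently relies on $p_0$ lying on the correct side of $s$ in exactly the same way, and its first direction (``the only possible configuration'') is asserted rather than computed. What the paper's route buys in exchange is the geometric content reused elsewhere: the identities $w_1=v_2$, $w_2=v_1$ are precisely what underlies the parallelism criterion of Remark~\ref{rem_exppar} and the blow-down induction in the proof of Theorem~\ref{thm_deltedlz}, which your determinant bookkeeping verifies but does not exhibit directly.
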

 
 \begin{proof}
In the case when $D$ is the exceptional divisor, the only possible configuration is shown on the Figure \ref{fig_blowdownc}.

 On the other hand, we note that all Delzant triangles are the same up to $SL(2,\ZZ),$ rescaling and translations. Therefore, if $v_1$ and $v_2$ are the velocities of the particles sent from the ends of $s\subset\partial\Delta$ then $s$ is parallel to $v_1-v_2$ (see Figure \ref{fig_blowdownc} on the left). If we denote by $w_1$ and $w_2$ the primitive vectors in the directions of sides adjacent to $s$ then by the construction $v_1=w_1+(v_1-v_2)$ and $v_2=w_2+(v_2-v_1).$ Thus, $w_1=v_2$ and $w_2=v_1$ and the prolongations of sides intersect at a point $p_0$. Denote by $\tilde\Delta$ the convex hull  of $\Delta$ and $p_0$. In this case $X$ is realized as a symplectic blow-up of a manifold with the moment polygon $\tilde\Delta$ and $D$ is the exceptional divisor. 
 \end{proof}

\begin{figure}
\begin{tikzpicture}

\draw(3.2,3.2)node{$s$};

\draw[->](4,2)--++(-1,0);
\draw(3.1,1.7)node{$v_2$};

\draw[->](2,4)--++(0,-1);
\draw(1.7,3.1)node{$v_1$};

\draw[->](2,2)--++(-1,-1);
\draw(0.5,1.3)node{$v_1+v_2$};

\draw[->](4,2)--++(0,-1);
\draw(3.3,1)node{$w_2=v_1$};

\draw[->](2,4)--++(-1,0);
\draw(1.2,3.7)node{$w_1=v_2$};

\draw(0,4)--++(2,0)--++(2,-2)--++(0,-2);
\draw(2,4)--++(0,-2)--++(-2,-2);
\draw(4,2)--++(-2,0);

\begin{scope}[xshift=200]

\draw(0,4)--++(4,0)--++(0,-4);
\draw(4,4)--++(-4,-4);
\draw[dashed](2,4)--++(2,-2)--++(-2,0)--++(0,2);

\end{scope}

\end{tikzpicture}
\caption{Two particles collide, trajectories form a Delzant triangle with the side $s,$ emanating a new one with velocity $v_1+v_2$. We will show that two sides of the polygon are parallel to the trajectories of the particles, if they collide. We can move $s$ towards the intersection of the sides. This corresponds to blowdown on the right. Note that blowing up gives a simple branching of the tropical curve. 
}
\label{fig_blowdownc}
\end{figure}
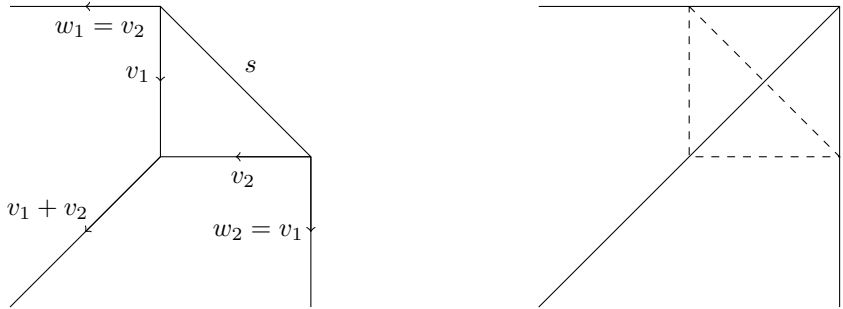

If the conditions of Proposition \ref{prop_eblowdun} are satisfied, we call the side $s$ removable. Removing a side is opposite to cutting a corner.

\begin{remark}
A more visual criterium for the removability of $D$ is that the sides adjacent to $s$ are parallel to the two edges of $C_\Delta$ adjacent to $s$ in the reversed order (as on the Figures \ref{fig_trbl} and \ref{fig_blowdownc}).
\label{rem_exppar}
\end{remark}

By Proposition \ref{prop_lastedge}, unless $C_\Delta$ has only one vertex  or it has only two vertices belonging to the ends of $last_\Delta$ (and in this cases we call $C_\Delta$ unbranched), there exist a side $s$ such that the particles sent from the ends of this side meet in a non-terminal vertex, such $s$ satisfies conditions of Proposition \ref{prop_eblowdun}. In this case we can easily relate $C_\Delta$ and the curve corresponding to $\Delta$ with $s$ blown down (see Figure \ref{fig_blowdownc}).

One may notice that in the cases when $\Delta$ is a lattice polygon, $C_\Delta$ passes through the vertices of $\Delta'=\operatorname{ConvexHull} (\Delta^\circ\cap\ZZ^2).$ Iterating the process we get a sequence of polygons $\Delta,\Delta',\Delta'',\dots$ And we can think of a function $F_\Delta$ on $\Delta$ whose level sets are $\partial\Delta,\partial\Delta',\partial\Delta'',\dots$ for the levels $0,1,2,\dots$ This function $F_\Delta$ will be piecewise linear and not smooth exactly along $C_\Delta,$ and we say that $F_\Delta$ defines $C_\Delta.$ In general, a formal definition of $F_\Delta$ can be given as follows.

\begin{remark}
After the first few moments, a particle sent from a vertex of $\Delta$ stay at the same distance from the sides adjacent to that vertex. 
\label{rem_initdist}
\end{remark}

The distance is taken in the ``integral'' (lattice-invariant) normalization, a length of a lattice vector $v=(x,y)$ is defined to be $|gcd(x,y)|,$ this is equal to the quotient of $|v|$ by the length of a primitive vector parallel to $v.$  Therefore, a particle runs a unit of distance through the unit of time in this normalization. In particular, if we want to compute the distance between a line with a rational slope and a point, one applies an $SL(2,\ZZ)$ transformation to make the line horizontal and the integral normalized distance between the new point and the new line (is the same as between the old ones) is equal to the usual distance in this case. Anyway, for a side $s$ of $\Delta$ denote by $\lambda_s\colon\Delta\rightarrow \RR_{\geq0}$ the distance function from the line prolonging $s.$ Note that this extends to a linear function with integral gradient on $\RR^2$ supporting $\Delta.$
Define $F_\Delta\colon\Delta\rightarrow\RR_{\geq 0}$ by \begin{equation}
F_\Delta(p)=\min_s\lambda_s(p)
\label{eq_fdelz} 
 \end{equation}
In this formula the minimum is taken over all sides $s$ of $\Delta,$ and we may think of $\lambda_s(x,y)=a+ix+jy$ as of monomial ``$ax^iy^j$'' and of $\min$ as of summation. Therefore, $F_\Delta$ is a formal analogue of a polynomial. This mode of thinking is casual for tropical geometry where such piece-wise linear polynomials are seen as the limits of the usual ones.

\begin{proposition}
$F_\Delta$ defines $C_\Delta.$
\label{prop_fdefc}
\end{proposition}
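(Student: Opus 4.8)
The phrase \emph{$F_\Delta$ defines $C_\Delta$} means, as explained just before the proposition, that the piecewise linear function $F_\Delta=\min_s\lambda_s$ is non-smooth exactly along $C_\Delta$; that is, the corner locus of this tropical polynomial --- the set where the minimum is attained by at least two of the affine functions $\lambda_s$ --- coincides, as a weighted planar graph, with the particle trajectory $C_\Delta$. The plan is to describe both objects through the inward offsetting of $\Delta$ and to match them edge by edge.

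First I would identify the sublevel geometry. For $t\ge 0$ the set $\Omega_t=F_\Delta^{-1}[t,\infty)=\bigcap_s\{\lambda_s\ge t\}$ is the convex polygon obtained from $\Delta$ by pushing every supporting line inward by lattice distance $t$; here I use that each $\lambda_s$ is affine with primitive integral gradient (the inward normal of $s$) and vanishes on the line prolonging $s$, so $\{\lambda_s=t\}$ is the parallel line at lattice distance $t$, and $F_\Delta$ is the lattice distance to $\partial\Delta$. A point $p$ with $F_\Delta(p)=t$ lies in the relative interior of a side of $\Omega_t$ iff a single $\lambda_s$ realizes the minimum, and there $F_\Delta$ is locally affine; it is a vertex of $\Omega_t$, or lies on the terminal ridge where $\Omega_t$ degenerates, iff at least two of the $\lambda_s$ tie. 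Hence the corner locus equals the union of the vertex-trajectories of the polygons $\Omega_t$ together with that terminal ridge.

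Next I would compute these vertex trajectories and match them with the particles. Fix a vertex of $\Omega_t$ cut out by adjacent sides $s_1,s_2$ and normalize by $SL(2,\ZZ)$ so that the primitive edge vectors are $(1,0)$ and $(0,1)$ (possible by the Delzant condition). Then $\lambda_{s_1},\lambda_{s_2}$ become the two coordinate distances, and the requirement that both grow at unit rate as $t$ increases forces the vertex to move with velocity $(1,1)=w_1+w_2$, i.e.\ exactly the initial particle velocity of Remark~\ref{rem_initdist}; in particular it travels along the bisector $\{\lambda_{s_1}=\lambda_{s_2}\}$. Thus, while the combinatorial type of $\Omega_t$ is constant, each moving vertex traces a straight segment of $C_\Delta$. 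The type changes precisely when a side $s$ shrinks to zero length, i.e.\ two adjacent vertices coincide and three functions $\lambda_{s_-},\lambda_s,\lambda_{s_+}$ tie; this is a particle collision, producing a vertex of the corner locus. The merged vertex is then cut out by $s_-,s_+$ and moves with velocity $w_-+w_+$, which by the smoothness of all non-terminal vertices (Proposition~\ref{prop_lastedge}) is primitive, so momentum conservation assigns it weight one; this is the balancing condition of Figure~\ref{fig_balancing}, and weights of the corner locus agree with masses of $C_\Delta$.

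Finally I would treat termination. By Lemma~\ref{lem_termination} the offsetting stops at a unique time at the terminal point $p_\Delta$, and by Proposition~\ref{prop_lastedge} two cases occur: either $\Omega_t$ shrinks to the single point $p_\Delta$, and the corner locus simply ends there with no multiple edge; or the two surviving sides are parallel, $\Omega_t$ degenerates to a segment, and the corner locus contains this whole segment as the multiplicity-two last edge $last_\Delta$, along which the two mass-two particles annihilate at its midpoint $p_\Delta$. In both cases the corner locus is bounded, connected and terminates at $p_\Delta$, matching $C_\Delta$ as a weighted graph. The main obstacle is precisely this collision/termination bookkeeping: verifying that the degenerating offset dynamics reproduces the momentum-conservation rule with the correct integer weights, including simultaneous (non-generic) collisions and the degenerate parallel-sides terminal case. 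Here I would lean on the Delzant basis condition to keep the non-terminal vertices primitive and on Proposition~\ref{prop_lastedge} to confine all multiplicity to the single edge $last_\Delta$.
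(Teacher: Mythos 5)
Your overall picture --- the level sets $\Omega_t=F_\Delta^{-1}[t,\infty)$ are inward lattice offsets of $\Delta$, the corner locus of $F_\Delta$ is the union of the vertex trajectories of these offsets together with the terminal ridge, and changes of combinatorial type are particle collisions --- is exactly the picture behind the paper's induction-on-time proof. But your argument is circular within the paper's logical architecture: you invoke Proposition \ref{prop_lastedge} (primitivity/smoothness of non-terminal vertices, confinement of all multiplicity to $last_\Delta$) and Lemma \ref{lem_termination}, yet neither is proved independently in the paper. Immediately after the proof of Proposition \ref{prop_fdefc} the authors state that it is precisely ``the representation of $C_\Delta$ as a curve of $F_\Delta$'' that ``makes Proposition \ref{prop_lastedge} and Lemma \ref{lem_termination} evident'' --- those statements are downstream of the proposition you are proving, so you cannot lean on them here.

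Moreover, the places where you lean on them are exactly the crux, which the paper resolves by other means. What must actually be established is: (a) after the first collision time $\e$ the polygon $\Delta_\e$ is again Delzant --- this is Theorem \ref{thm_deltedlz}, and you need it even to perform your $SL(2,\ZZ)$ normalization at a vertex of $\Omega_t$ created by a merge, since such a vertex is cut out by sides that were \emph{not} adjacent in $\Delta$ (for the initial epoch Delzantness of $\Delta$ itself suffices, but not afterwards); (b) at a non-maximal collision exactly one side collapses, i.e.\ exactly two particles meet --- this is Lemma \ref{lem_moreonecol}, which is what rules out the ``simultaneous (non-generic) collisions'' you flag but do not handle; and (c) the collapsing side satisfies $\nabla\lambda_{s}=\nabla\lambda_{s_1}+\nabla\lambda_{s_2}$ (equation \eqref{eq_sumrem}), so the unique local model for a non-maximal collision is the removable-corner configuration of Figure \ref{fig_blowdownc}; primitivity of the merged velocity $w_-+w_+$ then comes for free from Delzantness (it is the sum of a basis of $\ZZ^2$), not from Proposition \ref{prop_lastedge}. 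With (a)--(c) in hand, the $F_\Delta$-evolution after time $\e$ coincides with the particle process restarted from $\Delta_\e$, and induction on collision times closes the argument --- which is the paper's proof. Your closing paragraph correctly identifies the collision/termination bookkeeping as ``the main obstacle,'' but defers it to results that are consequences of the proposition rather than inputs to it; repairing the proof means substituting the chain Theorem \ref{thm_deltedlz} $+$ Lemma \ref{lem_moreonecol} $+$ \eqref{eq_sumrem} (all proved before Proposition \ref{prop_fdefc}) for your appeals to Proposition \ref{prop_lastedge} and Lemma \ref{lem_termination}.
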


The verb ``defines'' means that $C_\Delta$ is the corner locus of $F_\Delta.$ Remark \ref{rem_initdist} justifies the proposition in the neighborhood of $\partial\Delta.$ To extend this, we note that $F_\Delta$ is a continuous concave piecewise-linear function. Consider the maximum ${\bold m}_\Delta$ of $F_\Delta$ and for $0<\e<{\bold m}_\Delta$ there exist a polygon $\Delta_\e$ obeying $\partial\Delta_\e=F_\Delta^{-1}(\e).$ The vertices of $\Delta_\e$ are the positions of particles at time $\e.$ 

\begin{thm}
The polygon $\Delta_\e$ is Delzant for $\e\in[0,{\bold m}_\Delta).$
\label{thm_deltedlz}
\end{thm}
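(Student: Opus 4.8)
The plan is to induct on the number of sides of $\Delta$, crossing one particle collision at a time and using Propositions \ref{prop_lastedge} and \ref{prop_eblowdun} to control the instant a side disappears. First I would record the shape of the level sets: writing each side $s$ on a line $n_s\cdot p=c_s$ with $n_s$ its primitive inner normal, we have $\lambda_s(p)=n_s\cdot p-c_s$, so $\Delta_\e=\{p:\ n_s\cdot p\ge c_s+\e\ \text{for all sides }s\}$ is an intersection of inward-shifted half-planes whose sides are parallel to those of $\Delta$. A vertex of $\Delta_\e$ is a point where two constraints, say those of $s_1,s_2$, are active, and since the $90^\circ$ rotation lies in $SL(2,\ZZ)$, sends a primitive normal to a primitive edge direction and preserves determinants, that corner is Delzant exactly when $|\det(n_{s_1},n_{s_2})|=1$. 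Hence it suffices to verify this determinant condition at every vertex of $\Delta_\e$.

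Let $\e_0\in(0,\mathbf{m}_\Delta]$ be the first time some side of $\Delta_\e$ contracts to a point, with $\e_0=\mathbf{m}_\Delta$ if none does before $\mathbf{m}_\Delta$. For $\e\in[0,\e_0)$ every side persists with positive length, so $\Delta_\e$ has exactly the adjacency structure of $\Delta$; each of its vertices carries the same active pair $(n_{s_1},n_{s_2})$ as the corresponding vertex of $\Delta$, and the determinant condition holds because $\Delta$ is Delzant. When $\e_0=\mathbf{m}_\Delta$ (for instance for a Delzant triangle, whose evolution is a shrinking homothetic triangle) this already covers the whole interval and furnishes the base of the induction.

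Now assume $\e_0<\mathbf{m}_\Delta$. At $\e_0$ each degenerating side $s$ contracts because the two particles issued from its endpoints collide, producing a vertex of $C_\Delta$; as $\e_0<\mathbf{m}_\Delta$ this vertex is not the terminal point $p_\Delta$, so by Proposition \ref{prop_lastedge} it is $3$-valent and smooth. Equivalently $s$ meets the hypotheses of Proposition \ref{prop_eblowdun} and is removable, and the proof of that proposition shows that the two sides flanking $s$, once prolonged, meet at a point $p_0$ forming with $s$ a Delzant triangle; in particular those two sides form a Delzant corner on their own. Carrying this out for every side lost at $\e_0$ shows that $\Delta_{\e_0}$ is again Delzant and has strictly fewer sides than $\Delta$.

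Finally I would use the semigroup property $\Delta_\e=(\Delta_{\e_0})_{\,\e-\e_0}$ for $\e\ge\e_0$. Shifting every side inward by $\e$ equals shifting by $\e_0$ and then by $\e-\e_0$, so the only thing to check is that the sides removed at $\e_0$ may be dropped without altering the level sets; this holds because near $p_0$ the surviving flanking sides recede into the interior at least as fast as the removed side advances, keeping its half-plane redundant for all $\e\ge\e_0$ (the analytic shadow of the blow-down). Granting this, $\mathbf{m}_{\Delta_{\e_0}}=\mathbf{m}_\Delta-\e_0$ and the induction hypothesis applied to $\Delta_{\e_0}$ gives that $\Delta_\e$ is Delzant for $\e\in[\e_0,\mathbf{m}_\Delta)$, which with the second paragraph completes the proof. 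The main obstacle is precisely the passage across a collision: one must be sure that collapsing a vanishing side leaves a unimodular corner and not a lattice defect, and this is delivered by the smoothness of non-terminal vertices (Proposition \ref{prop_lastedge}) together with the removability criterion (Proposition \ref{prop_eblowdun}); the subsidiary technical point is the redundancy of the discarded constraints that legitimizes the restart.
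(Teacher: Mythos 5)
Your overall architecture (follow the evolution to the first collision time, show the vanishing side is removable, discard its now-redundant constraint via the semigroup property $\Delta_\e=(\Delta_{\e_0})_{\e-\e_0}$, and induct on the number of sides) is parallel to the paper's induction-by-blowdown. But the pivotal step is circular: you derive removability of a side collapsing at $\e_0<\mathbf{m}_\Delta$ from Proposition \ref{prop_lastedge} (non-terminal vertices of $C_\Delta$ are $3$-valent and smooth). In the paper that proposition is stated without proof and is only justified \emph{after} Proposition \ref{prop_fdefc}, whose proof itself invokes Theorem \ref{thm_deltedlz} and Lemma \ref{lem_moreonecol}. So you are assuming a statement that, in this development, is a downstream consequence of the very theorem you are proving.

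The gap is not merely one of logical ordering: the implication you need is genuinely false without further argument. Colliding particles from the two ends of a side do \emph{not} imply that the side is removable --- for the square, the particles from the ends of every side collide (all four meet at the center), yet no side of the square is removable; likewise for the trapezium on the right of Figure \ref{fig_trbl}. What is true, and what the paper's proof of Theorem \ref{thm_deltedlz} actually establishes, is the conditional statement: if the particles from the ends of $s$ collide at a point $p$ with $F_\Delta(p)<\mathbf{m}_\Delta$, then $s$ is removable. This requires the case analysis of Figure \ref{fig_collint}: ruling out parallel flanking sides (Lemma \ref{lem_parlmax}, a squishing argument), ruling out the configuration where the prolongations of $s_1,s_2$ meet on the same side as $p$ (there every $\nabla\lambda_{s_3}$ is a positive combination of $\nabla\lambda_{s_1},\nabla\lambda_{s_2}$, forcing $p$ to be the global maximum), and then, in the remaining configuration, the lattice argument that the quadrilateral with vertices $0,\nabla\lambda_{s_1},\nabla\lambda_{s_2},\nabla\lambda_s$ has area $1$, forcing $\nabla\lambda_s=\nabla\lambda_{s_1}+\nabla\lambda_{s_2}$ and hence removability \eqref{eq_sumrem}. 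None of this appears in your proposal; also absent is the exclusion of several sides collapsing at the same non-terminal vertex (the paper's Lemma \ref{lem_moreonecol}), which your restart at $\e_0$ tacitly needs. Your redundancy argument for the discarded constraint is fine and can be made precise exactly because $\lambda_s=\lambda_{s_1}+\lambda_{s_2}-\e_0$ once \eqref{eq_sumrem} is known --- but \eqref{eq_sumrem} is the content you must supply, not import from Proposition \ref{prop_lastedge}.
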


 Note that $F_\Delta^{-1}({\bold m}_\Delta)$ has empty interior and cannot be Delzant.

\begin{proof}
Consider the smallest time $\e>0$ such that a pair of particles emitted from the ends of a side $s$ collide at point $p.$ If $s$ is removable then we can consider $\tilde\Delta$ the moment polygon of the blowdown (see Figure \ref{fig_blowdownc}). In this case either $\tilde\Delta_\e=\Delta_\e$ if $\e\geq e_0$ or $\Delta_\e$ is the blowup of $\tilde\Delta_\e$ otherwise. In both cases $\Delta_\e$ is Delzant whenever $\tilde\Delta_\e$ is Delzant, so we can use induction on the number of sides.

We are going to prove that if $s$ is not removable then $F_\Delta(p)=\e$ is equal to the maximum ${\bold m}_\Delta=\max_\Delta F_\Delta.$ In this case for $0\leq\delta<{\bold{m}_\Delta}$ the polygons $\Delta_\delta$ all have the same dual fan.

Consider the sides $s_1$ and $s_2$ of $\Delta$ adjacent to $s.$ There are three different situations (see Figure \ref{fig_collint}). In the first situation the sides $s_1$ and $s_2$ are parallel and we are done here by by the following Lemma.  

\begin{lemma}
Consider a point $p\in\Delta^\circ$ such that $F_\Delta(p)=\lambda_{s_1}(p)=\lambda_{s_2}(p)$ for a pair of parallel sides $s_1$ and $s_2.$ Then $F_\Delta(p)={\bold m}_\Delta.$
\label{lem_parlmax}
\end{lemma}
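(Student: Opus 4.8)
The plan is to exploit the fact that lattice distance to two parallel sides behaves additively. Write $n$ for the common primitive normal vector of the parallel sides $s_1$ and $s_2$, so that the lines prolonging them are $\langle n,\cdot\rangle=a_1$ and $\langle n,\cdot\rangle=a_2$ with, say, $a_1<a_2$, and $\Delta$ lies in the strip between them. First I would record that because $n$ is primitive and common to both sides, the lattice-normalised distance functions are simply $\lambda_{s_1}(q)=\langle n,q\rangle-a_1$ and $\lambda_{s_2}(q)=a_2-\langle n,q\rangle$ for every $q\in\Delta$; the point is that the single normalisation factor attached to $n$ serves both lines at once, since $\lambda_s$ extends to a linear function with integral gradient supporting $\Delta$.

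The key identity is then $\lambda_{s_1}(q)+\lambda_{s_2}(q)=a_2-a_1=:W$ for all $q\in\Delta$, i.e. the sum is the constant lattice width of $\Delta$ in the direction $n$. From this the whole statement drops out: for every $q\in\Delta$ one has $F_\Delta(q)\le\min\bigl(\lambda_{s_1}(q),\lambda_{s_2}(q)\bigr)\le\tfrac12\bigl(\lambda_{s_1}(q)+\lambda_{s_2}(q)\bigr)=W/2$, so ${\bold m}_\Delta\le W/2$. At the distinguished point $p$ we are given $\lambda_{s_1}(p)=\lambda_{s_2}(p)$, and since these two equal values sum to $W$, each of them equals $W/2$; hence $F_\Delta(p)=W/2$. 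Combining the two bounds with the trivial inequality $F_\Delta(p)\le{\bold m}_\Delta$ forces $F_\Delta(p)={\bold m}_\Delta=W/2$.

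The only step requiring genuine care is the first one: I must make sure that the two parallel sides really share the same primitive normal, so that their distance functions are measured with the same lattice normalisation and the sum telescopes to a constant rather than to something $q$-dependent. This is where I expect the main (and essentially the only) subtlety to sit; once the additivity $\lambda_{s_1}+\lambda_{s_2}\equiv W$ is established, the argument is a one-line ``the minimum is at most the average'' estimate. I would also note in passing that the same computation identifies the maximal value ${\bold m}_\Delta$ with half the lattice width in the direction $n$, which matches the geometric picture that particles emitted from a pair of parallel sides annihilate exactly at the terminal altitude.
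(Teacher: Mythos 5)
Your proof is correct, and it takes a genuinely different route from the paper's. The paper argues dynamically through the level sets: as $\delta\rightarrow F_\Delta(p)$ the polygons $\Delta_\delta$ get squished (the distance between the pair of parallel sides of $\Delta_\delta$ tends to zero, so the area degenerates while the diameter stays bounded), and this is impossible unless $F_\Delta(p)$ is the maximal value. That argument is qualitative and fits the particle-collision narrative of Section \ref{sec_curvepol}, but it leaves the degeneration claim somewhat informal. Your argument is instead a direct computation: since $s_1$ and $s_2$ are parallel, their lattice-normalised support functions have opposite primitive gradients $\pm n$, so $\lambda_{s_1}+\lambda_{s_2}\equiv W$ is the constant lattice width of $\Delta$ in the direction $n$; then $F_\Delta(q)\leq\min\bigl(\lambda_{s_1}(q),\lambda_{s_2}(q)\bigr)\leq W/2$ everywhere, while the hypothesis forces $F_\Delta(p)=W/2$, so $F_\Delta(p)={\bold m}_\Delta$. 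You correctly flag the one real subtlety — that both sides are measured with the same lattice normalisation, which holds because $n$ primitive implies $-n$ primitive — and your version buys something the paper's does not: the quantitative identification ${\bold m}_\Delta=W/2$ (under the lemma's hypothesis), and the observation that the maximal level set lies on the mid-line $\langle n,q\rangle=(a_1+a_2)/2$, which matches the fact that particles emitted from parallel sides annihilate at the midway altitude. Note only that without the hypothesised point $p$ your computation yields just the inequality ${\bold m}_\Delta\leq W/2$, so the identification of the maximum with half the width is conditional, as you use it.
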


\begin{proof}
We note that $\Delta_\delta\subset\Delta$ is got squished (i.e. diameter is bounded and the area goes to zero) as $\delta\rightarrow F_\Delta(p)$ since the distance between a pair of parallel sides tends to zero. This is not possible unless $F_\Delta(p)$ is the maximal value.  
\end{proof}

\begin{figure}[h]

\begin{tikzpicture}

\draw(4,4)--++(0,-4);
\draw(4,4)--++(-2,0);
\draw(4,0)--++(-2,0);
\draw(4,4)--++(-1,-2)--++(1,-2);
\draw(4.2,2)node{$s$};
\draw(3,0.2)node{$s_2$};
\draw(2.7,2)node{$p$};
\draw[->](3.5,3)--++(-0.5,-1);
\draw[->](3.5,1)--++(-0.5,1);
\draw(3,4.2)node{$s_1$};

\begin{scope}[xshift=140]
\draw(4,4)--++(0,-4);
\draw(4,4)--++(-2,-1);
\draw(4,0)--++(-2,0);
\draw(4,4)--++(-1,-2)--++(1,-2);
\draw(4.2,2)node{$s$};
\draw(3,0.2)node{$s_2$};
\draw(2.7,2)node{$p$};
\draw[->](3.5,3)--++(-0.5,-1);
\draw[->](3.5,1)--++(-0.5,1);
\draw(2.8,3.7)node{$s_1$};
\end{scope}

\begin{scope}[xshift=280]
\draw(4,4)--++(0,-4);
\draw(4,4)--++(-2,1);
\draw(4,0)--++(-2,0);
\draw(4,4)--++(-1,-2)--++(1,-2);
\draw(4.2,2)node{$s$};
\draw(3,0.2)node{$s_2$};
\draw(2.7,2)node{$p$};
\draw[->](3.5,3)--++(-0.5,-1);
\draw[->](3.5,1)--++(-0.5,1);
\draw(2.8,4.2)node{$s_1$};
\end{scope}

\end{tikzpicture}

\caption{The possible configurations of sides with respect to a pair of collided particles. The collision happens at $p$ to the left from the vertical side $s.$ In the first case the adjacent sides $s_1$ and $s_2$ are parallel, in the second case their prolongations intersect to the left from $s$ or to the right in the third case.}
\label{fig_collint}
\end{figure}
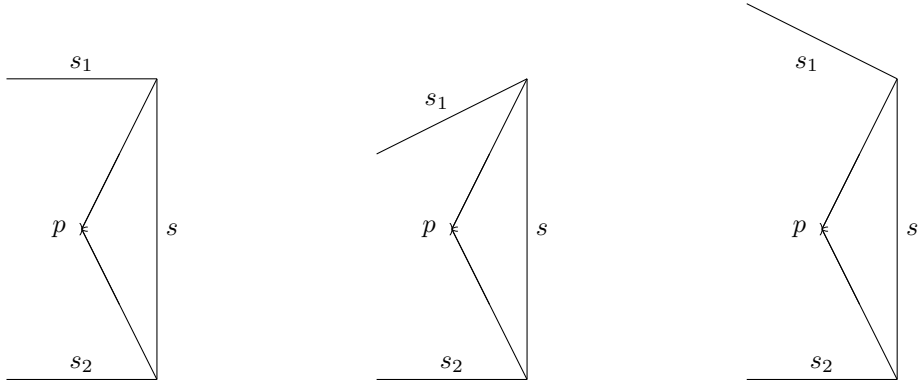

In the second case, the point of intersection of the prolongations of $s_1$ and $s_2$ is on the same side with respect to $s$ as the point of collision $p.$ If we think of $F_\Delta,$ its gradient $\nabla F_\Delta$ is well defined on the compliment to the tropical curve of $F_\Delta.$ Consider a side $s_3\neq s_1,s_2,s,$ the gradient of it support function $\lambda_{s_3}$ is the primitive vector orthogonal to a side $s_3$ and therefore is a positive combination of gradients for $\lambda_{s_1}$ and $\lambda_{s_1}.$ In particular, $p$ is a global maximum of $F_\Delta.$

In the third case, we see that $\nabla\lambda_s$ is a positive linear combination of $\nabla\lambda_{s_1}$ and $\nabla\lambda_{s_2}.$ Also, we know that $\nabla\lambda_{s_1},\nabla\lambda_{s}$ and $\nabla\lambda_{s_2},\nabla\lambda_{s}$ give a basis in $\ZZ^2.$ Now, note that the convex hull of the points $\nabla\lambda_{s_1},\nabla\lambda_{s_2},0,\nabla\lambda_{s}$ is a four-gone with area $1.$ It contains a triangle with vertices $\nabla\lambda_{s_1},\nabla\lambda_{s_2},0.$ (this is true since $s$ is getting shrunk, see Figure \ref{fig_srinks})
\begin{figure}

\begin{tikzpicture}
\begin{scope}[xshift=50]

\draw(4,4)--++(0,-4);
\draw(4,4)--++(-1.5,1.5);
\draw(4,0)--++(-4.5,0);
\draw(4,4)--++(-4,0)--++(4,-4);
\draw(4.2,2)node{$s$};
\draw(1,0.2)node{$s_2$};
\draw(0,4.2)node{$p$};
\draw(3,4.6)node{$s_1$};
\draw(4.3,0.2)node{$p_2$};
\draw(4.3,4)node{$p_1$};

\end{scope}

\begin{scope}[xshift=250,yshift=60,scale=1.5]

\draw[->](0,0)--++(1,0);
\draw(1.3,0.3)node{$-\nabla \lambda_s$};

\draw[dashed](0,-1)--++(1,2);

\draw[->](0,0)--++(1,1);
\draw(1,1.2)node{$-\nabla \lambda_{s_1}$};

\draw[->](0,0)--++(0,-1);
\draw(-0.4,-0.8)node{$-\nabla \lambda_{s_2}$};

\draw(-0.1,0.1)node{$0$};

\end{scope}

\end{tikzpicture}
\caption{The third type of collision and its local dual picture. Note that $\nabla \lambda_s$ doesn't belong to the triangle spaned by $\nabla \lambda_{s_1}$, $\nabla \lambda_{s_2}$ and $0$ because a segment $[p,p_i],$   $i=1,2,$ is orthogonal to the vector $\nabla\lambda_{s_i}-\nabla\lambda_{s_i}$ and $p\in\Delta.$}
\label{fig_srinks}
\end{figure}
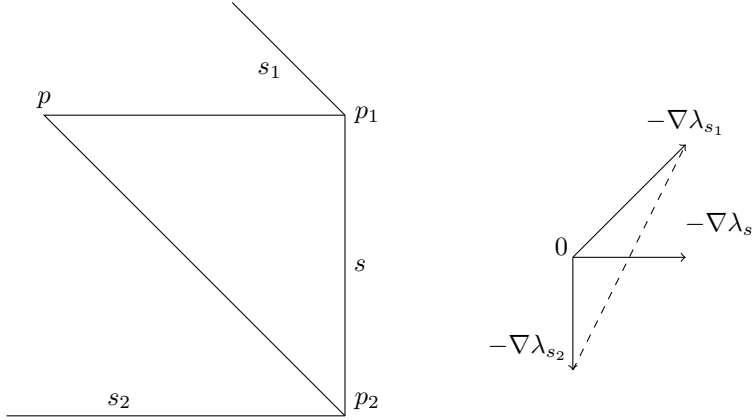

Therefore, this lattice triangle has the minimal possible area $1\over 2$ and $s$ must be removable since \begin{equation}
 \nabla\lambda_{s}=\nabla\lambda_{s_2}+\nabla\lambda_{s_2}.    
\label{eq_sumrem}
\end{equation}
 
\end{proof}

Summarizing the proof above, we saw that only the right configuration on Figure \ref{fig_collint} can give a collision at a non-terminal point. In fact, exactly two particles can meet at a non-maximal point. 

\begin{lemma}
If when passing from $\Delta$ to $\Delta_{\e}$ two sides collapse at the same vertex $p$ of $C_\Delta$ then $F_\Delta(p)={\bold m}_\Delta.$ 
\label{lem_moreonecol}
\end{lemma}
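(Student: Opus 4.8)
The plan is to reduce everything to Lemma~\ref{lem_parlmax} by showing that a simultaneous collapse of two sides at $p$ forces $\Delta$ to possess a pair of \emph{parallel} sides whose distance functions both attain the value $F_\Delta(p)$.

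First I would fix notation and observe that the two collapsing sides must be adjacent. A side of positive length cannot have both of its endpoints equal to $p$, so any side lying between the two collapsing ones would itself be forced to collapse; hence the two collapsing sides share a vertex. Call them $b$ and $c$ (consecutive along $\partial\Delta$), let $a$ be the side preceding $b$ and $d$ the side following $c$. For $\delta<\e$ close to $\e$, the polygon $\Delta_\delta$ has three consecutive vertices $v_1=a\cap b,\ v_2=b\cap c,\ v_3=c\cap d$ (intersections of the prolonged sides), and all three tend to $p$ as $\delta\to\e$. Consequently $\lambda_a(p)=\lambda_b(p)=\lambda_c(p)=\lambda_d(p)=F_\Delta(p)=\e$, and $p\in\Delta^\circ$ by Lemma~\ref{lem_termination}.

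Next I would argue by contradiction, assuming $F_\Delta(p)<{\bold m}_\Delta$, and apply the three--case dichotomy from the proof of Theorem~\ref{thm_deltedlz} to the collapse of $b$ against its two neighbors $a$ and $c$. The first two configurations in Figure~\ref{fig_collint} place $p$ at the global maximum, contradicting $F_\Delta(p)<{\bold m}_\Delta$; hence we are in the third configuration, so $b$ is removable and $\nabla\lambda_b=\nabla\lambda_a+\nabla\lambda_c$ by \eqref{eq_sumrem}. Running the identical argument for the collapse of $c$ against its neighbors $b$ and $d$ gives $\nabla\lambda_c=\nabla\lambda_b+\nabla\lambda_d$. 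Adding the two relations yields $\nabla\lambda_a+\nabla\lambda_d=0$, i.e.\ $\nabla\lambda_a=-\nabla\lambda_d$, so $a$ and $d$ are parallel sides. Since $\lambda_a(p)=\lambda_d(p)=F_\Delta(p)$ with $a,d$ parallel and $p\in\Delta^\circ$, Lemma~\ref{lem_parlmax} forces $F_\Delta(p)={\bold m}_\Delta$, contradicting the assumption; therefore $F_\Delta(p)={\bold m}_\Delta$.

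The step I expect to be the main obstacle is justifying that the single--side three--case analysis may legitimately be applied to each of the two \emph{simultaneously} collapsing sides. I would check that this analysis is genuinely local: the configuration governing the collapse of $b$ depends only on $b$ and its two adjacent sides $a,c$ and on which side of $b$ the prolongations of $a,c$ meet, and these data are well defined for $b$ (via $a,c$) and for $c$ (via $b,d$) independently of the fact that the neighbouring side also collapses at the same instant. Once that independence is confirmed, the two removability relations hold simultaneously and the telescoping to $\nabla\lambda_a=-\nabla\lambda_d$ is immediate; I would also remark that the same telescoping applied to a run of $k$ consecutive collapsing sides produces a parallel pair among the outer sides, so the conclusion extends verbatim beyond the case of exactly two.
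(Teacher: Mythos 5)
Your proof is correct and takes essentially the same route as the paper's: the paper likewise puts each of the two (adjacent) contracting sides in the third configuration of Figure \ref{fig_srinks}, combines the resulting removability relations of type \eqref{eq_sumrem} to conclude that the two outer neighboring sides are parallel, and then contradicts Lemma \ref{lem_parlmax}. Your added justifications --- that the collapsing sides may be assumed adjacent, that $\lambda$ of all four sides equals $F_\Delta(p)$ at $p$, and that the case analysis is local to each collapsing side --- merely make explicit what the paper leaves implicit.
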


\begin{proof}
Supposing $p$ is not maximal, note that for both contracting sides we have a situation described on Figure \ref{fig_srinks}. Also, note that we can assume that these sides are adjacent. Now we apply $SL(2,\ZZ)$ transformation to make the contracting sides horizontal and vertical. In terms of Figure \ref{fig_srinks} we have fixed the slopes of $s$ and $s_2$ which are contracting now. There is only one position (on the dual picture) where we can add $-\nabla \lambda_{s_3}$ for $s_3$ adjacent to $s_2,$ i.e. (as in the end of the proof of Theorem \ref{thm_deltedlz}) $\nabla \lambda_{s_2}=\nabla \lambda_{s}+\nabla \lambda_{s_3}.$ Therefore, $s_1$ is parallel to $s_3$ and we arrive to a contradiction with Lemma \ref{lem_parlmax}.
\end{proof}

\begin{proof}[Proof of Proposition \ref{prop_fdefc}]
We use induction on time. Consider again the first time of a collision $\e.$ 
By Remark \ref{rem_initdist}, $C_\Delta$ coincides with the curve given by $F_\Delta$ in $\Delta\backslash\Delta_{\e}$. And this is enough for the proof if $\e={\bold m}_\Delta.$ Otherwise, by Theorem \ref{thm_deltedlz} we can run the particle process for $C_{\Delta_\e}.$ On the other hand, by Lemma \ref{lem_moreonecol} and equation \eqref{eq_sumrem} there is only one local model for a non-maximal collision shown on Figure \ref{fig_blowdownc} wich guaranties that the processes for $\Delta$ after the time $\e$ and the process for $\Delta_\e$ are the same. 
\end{proof}

The representation of $C_\Delta$ as a curve of $F_\Delta$ makes Proposition \ref{prop_lastedge} and Lemma \ref{lem_termination} evident. Indeed, the last edge $last_\Delta$ becomes just $F_\Delta^{-1}({\bold m}_\Delta)$ and for a point $p\in C_\Delta$ the value $F_\Delta(p)$ is the time at which $p$ is riched by a particle (or a group of particles if $p$ is a vertex). There are no multiple edges except for the maximal edge $last_\Delta$ since a non-maximal collision has a unique model shown on Figure \ref{fig_blowdownc}. Finally, note that the last edge has multiplicity two since it appears as a limit of a collapsing polygon $\Delta_\e$ with a pair of parallel sides and the gradients of the support functions of these sides are primitive and opposite.

\subsection{Tropical series of convex domain}
The formula \eqref{eq_fdelz} works in a greater generality.
For a compact convex domain $\Omega\subset\mathbb{R}^2$ define a function $F_\Omega\colon \Omega \rightarrow \mathbb{R}$  
\begin{equation} \label{eq_fomega}
 F_\Omega(z)=\inf_{v\in\mathbb{Z}^2} (\alpha_v+v\cdot z),
 \end{equation}
 where a number $\alpha_v$ for a vector $v\in\mathbb{Z}^2\backslash \{(0,0)\}$ is given by 
\begin{equation} \label{eq_alphav}
 \alpha_{v}=-\min_{z\in\Omega}z\cdot v.
 \end{equation}

\begin{remark}
In fact, $F(z)$ equals to the minimal normalized distance from $z$ to $\Omega$ in the following sense: for each primitive lattice vector $(p,q)$ we measure the distance between $z$  and the support line for $\Omega$, corresponding to $(p,q)$ and multiply this distance by $\sqrt{p^2+q^2}$. Then, $F(z)$ is equal to the minimal among all these numbers when $(p,q)$ runs over all primitive lattice vectors.
\end{remark}
 
 We call $F_\Omega$ the tropical series of $\Omega.$ The terminology comes from the fact that if in \eqref{eq_fomega} we replace $\mathrm{inf}$ with $\sum,$ summation with multiplication and the scalar product with an $\mathrm{exp}$ we would get something of the shape $\sum \alpha_v z^v$ which is a Laurent power series in two variables. Such a shift in arithmetics is custom for tropical geometry \cite{BIMS}. Following the analogy with analysis, we can say that $\Omega$ is the domain of convergence for the series representing the function $F_\Omega.$ 
 
\begin{remark}
If $\Omega$ is a polygon with rational slopes then only a finite number of monomials contribute to $F_\Omega,$ i.e. it can be represented by a tropical polynomial $\min_{v\in A}(a_v+z\cdot v)$ for a finite subset $A$ in $\ZZ^2\backslash\{(0,0)\}.$
\end{remark}

 To visualize $F_\Omega$ we look on its corner locus $C_\Omega\subset\Omega,$ i.e. the locus where $F_\Omega$ is not locally smooth. As a set, $C_\Omega$ is formed by a locally-finite union of segments with rational slopes, moreover, it has a natural structure of tropical analytic curve \cite{us_series,BIMS}. Note that $F_\Omega$ is linear on every face (i.e. on a connected component of $\Omega\backslash C_\Omega$),  and every edge of $C_\Omega$ has a prescribed multiplicity identified with the integral normalized length of the vector connecting $v_1$ and $v_2,$ where $v_j\cdot p+a_j$ represent the restrictions of $F_\Omega(p)$ to the faces adjacent to the edge. With this multiplicities we have the balancing condition on slopes satisfied at every vertex (see Figure \ref{fig_balancing}). To every vertex $p$ we associate a lattice polygon spanned by the monomials contributing at the vertex and $\mu(p)\in\mathbb{Z},$ {\it the multiplicity of the vertex} $p$  is defined to be twice the area of this dual polygon, see Figure \ref{fig_balancingdual} for a more detailed explanation.

 By definition, a vertex (or an edge) is called {\it smooth} if it has multiplicity one. 
 
 \begin{remark}
 Two vertices (or one vertex if $\lambda_\Omega=0$) where $F_\Omega$ attains its maximum are never smooth, since the origin, corresponding to the constant monomial contributing along the maximum, is never a vertex of the dual subdivision to $C_\Omega$. The conceptual reason is that $C_\Omega$ can be viewed as an elliptic curve and its genus is contracted to the maximal segment (or vertex). In particular, $C_\Omega\cap \Omega^\circ$ has no loops except for this hidden one, and thus is a tree. 
 \end{remark}

\begin{proposition}[\cite{us_series}] $F_\Omega$ is a non-negative continuous concave piecewise-linear function. The level set $F_\Omega^{-1}(\e)$ is $\partial\Omega$ for $\e=0,$ a segment with rational slope or a point for $\e={\bold m}_\Omega$, and a $\QQ$-polygon for intermediate values. 
 \label{prop_basicsF}
\end{proposition}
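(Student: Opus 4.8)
The plan is to read off all four assertions from the single structural fact that $F_\Omega$ is, by \eqref{eq_fomega}, an infimum of the affine functions $z\mapsto \alpha_v+v\cdot z$ over primitive $v\in\ZZ^2\setminus\{0\}$. Non-negativity and concavity come first and essentially for free. For $z\in\Omega$ and any $v$, the definition \eqref{eq_alphav} gives $v\cdot z\ge\min_{w\in\Omega}w\cdot v=-\alpha_v$, so every term $\alpha_v+v\cdot z$ is $\ge 0$; hence $F_\Omega\ge 0$. Being an infimum of affine functions, $F_\Omega$ is automatically concave and upper semicontinuous. The remaining work is to upgrade this to genuine continuity together with the local piecewise-linear structure, and then to identify the three level sets.

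Next I would establish continuity and piecewise-linearity in the interior by a compactness/growth estimate. Fix $z_0\in\Omega^\circ$ and choose $\rho>0$ with the ball $B(z_0,\rho)\subset\Omega$. Every support line of $\Omega$ is disjoint from this open ball, so for primitive $v$ the value $\alpha_v+v\cdot z$, which equals the normalized distance from $z$ to that support line (the Remark following \eqref{eq_alphav}), is at least $\tfrac{\rho}{2}\,|v|$ for all $z$ within $\rho/2$ of $z_0$. Since $F_\Omega$ is bounded above near $z_0$ by any single term, only the finitely many $v$ with $|v|$ below an explicit threshold can contribute to the infimum on a neighborhood of $z_0$. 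Thus near $z_0$ the function $F_\Omega$ is the minimum of finitely many affine functions with integer gradients, hence continuous and piecewise-linear with rational-slope break lines; moreover $F_\Omega(z_0)\ge\rho>0$, so $F_\Omega>0$ on $\Omega^\circ$.

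The level set at $0$ is then $\partial\Omega$. Interior points have $F_\Omega>0$ by the previous step, so I only need $F_\Omega=0$ on $\partial\Omega$. Given $z_0\in\partial\Omega$, I would take a supporting direction $n$ at $z_0$ and approximate it by primitive lattice vectors $v$ via Dirichlet's theorem: whether $n$ is irrational (smooth arc) or $z_0$ lies in the interior of an edge of irrational slope, the normalized distance $\alpha_v+v\cdot z_0$ tends to $0$, because the Euclidean gap to the relevant support line shrinks fast enough to beat the linear growth of $|v|$. Hence $F_\Omega(z_0)=0$ and $F_\Omega^{-1}(0)=\partial\Omega$. Continuity up to the boundary is then automatic: upper semicontinuity gives $\limsup_{z\to z_0}F_\Omega(z)\le F_\Omega(z_0)=0$, while $F_\Omega\ge0$ gives the matching lower bound. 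I expect this Diophantine step to be the main obstacle, since it is the only place where the arithmetic of $\ZZ^2$ genuinely enters and it must be made uniform across smooth and flat irrational boundary points.

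Finally I would treat the two distinguished level sets using concavity. The argmax $G=\{F_\Omega=\mathbf m_\Omega\}$, where $\mathbf m_\Omega=\max_\Omega F_\Omega$, is convex; if it had nonempty interior then $F_\Omega$ would be smooth and constant there, forcing the unique active gradient $v$ to vanish, which is impossible for $v\ne0$. Hence $G$ is a segment or a point, and if it is a segment it lies on the common boundary of two linear pieces with distinct integer gradients $v_1,v_2$, so it runs in the lattice direction orthogonal to $v_1-v_2$ and has rational slope. For an intermediate value $0<\e<\mathbf m_\Omega$, the superlevel set $\{F_\Omega\ge\e\}$ is convex by concavity, bounded since it sits inside the compact $\Omega$, and is cut out by the rational half-planes $\alpha_v+v\cdot z\ge\e$; as the closure of the nonempty open set $\{F_\Omega>\e\}$ it is a genuine convex body, so its boundary $F_\Omega^{-1}(\e)$ is a $\QQ$-polygon. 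This completes all four claims.
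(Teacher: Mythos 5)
Your argument is essentially correct, but there is nothing internal to compare it against: the paper states Proposition~\ref{prop_basicsF} as an imported result, citing \cite{us_series}, and gives no proof of its own. So what you have produced is a self-contained derivation where the paper has only a reference, and its skeleton is sound: nonnegativity and concavity directly from the infimum-of-affine-functions form of \eqref{eq_fomega}; the growth estimate $\alpha_v+v\cdot z\geq\tfrac{\rho}{2}|v|$ on interior balls, which reduces the infimum locally to a finite minimum and yields continuity, piecewise linearity with integer gradients, and $F_\Omega>0$ on $\Omega^\circ$; Diophantine approximation for the vanishing on $\partial\Omega$; and concavity for the two distinguished level sets. One reassurance and three small repairs. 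The reassurance: the uniformity you worry about in the Diophantine step is not needed. At a boundary point $z_0$ with unit inner normal $n$, choosing primitive $v$ at angle $O(1/|v|^2)$ to $n$ (continued-fraction convergents of the slope of $n$) and writing $v=|v|n+e$ gives $\alpha_v+v\cdot z_0=\max_{w\in\Omega}v\cdot(z_0-w)\leq |v|\max_{w}\,n\cdot(z_0-w)+|e|\operatorname{diam}\Omega\leq C/|v|$, pointwise at every boundary point, whether smooth, flat-irrational, or a corner (a corner is even easier: its normal cone has nonempty interior, hence contains a primitive vector whose term vanishes exactly); pointwise vanishing plus your u.s.c.\ argument then gives boundary continuity. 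The repairs: first, you silently restrict the infimum to primitive $v$, while \eqref{eq_fomega} runs over all nonzero $v\in\ZZ^2$ --- harmless, since a non-primitive term is a positive integer multiple of a nonnegative primitive term, but say so. Second, for the maximal level set your invocation of \emph{two} adjacent linear pieces is more than you need and slightly shaky as stated; it is cleaner to note that any single term $\alpha_v+v\cdot z$ active at an interior point of the argmax segment is affine along the segment, bounded below there by $\mathbf m_\Omega$ with equality at an interior point, hence constant along it, so $v\cdot u=0$ for the segment direction $u$ and the slope is rational. Third, the final sentence jumps from ``cut out by infinitely many rational half-planes'' to ``$\QQ$-polygon''; to get finitely many edges you should add that the compact set $\{F_\Omega\geq\e\}$ has positive distance $\rho$ from $\partial\Omega$, so by your own interior estimate only the finitely many $v$ with $|v|\leq \e/\rho$ can be active on it. With these additions the proof is complete and, in fact, supplies a verification the present paper delegates entirely to \cite{us_series}.
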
 

Therefore, we can speak of a canonical approximation by polygons for an arbitrary convex domain. Moreover, we have the following.

\begin{thm}
If $\partial\Omega$ has no corners then the intermediate level sets of $F_\Omega$ are Delzant polygons.
\label{thm_omegldelz}
\end{thm}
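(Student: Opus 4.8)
The plan is to reduce Theorem~\ref{thm_omegldelz} to the already established Delzant case, Theorem~\ref{thm_deltedlz}, by exploiting the semigroup property of the canonical evolution $\Omega\mapsto\Omega_t$ (see \cite{misha_thesis}). First I would record the self-similarity of the tropical series: for $0<t_0<\mathbf m_\Omega$ the support line of $\Omega_{t_0}$ with contributing primitive normal $v$ is exactly the level line $\{\alpha_v+v\cdot z=t_0\}$, so that $\alpha_v(\Omega_{t_0})=\alpha_v(\Omega)-t_0$ and hence
$$F_{\Omega_{t_0}}=F_\Omega-t_0\ \text{ on }\Omega_{t_0},\qquad C_{\Omega_{t_0}}=C_\Omega\cap\Omega_{t_0}.$$
Consequently every intermediate superlevel set of $\Omega$ is an intermediate superlevel set of $\Omega_{t_0}$: for fixed $t\in(0,\mathbf m_\Omega)$ and any $t_0\in(0,t)$ one has $F_\Omega^{-1}[t,\infty)=(\Omega_{t_0})_{t-t_0}$ with $t-t_0\in(0,\mathbf m_{\Omega_{t_0}})$, since $\mathbf m_{\Omega_{t_0}}=\mathbf m_\Omega-t_0$. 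Thus, once I know that $\Omega_{t_0}$ is a Delzant polygon for arbitrarily small $t_0>0$, applying Theorem~\ref{thm_deltedlz} to the Delzant polygon $\Omega_{t_0}$ finishes the proof. The whole theorem is therefore reduced to the local statement: \emph{if $\partial\Omega$ has no corners then $\Omega_{t_0}$ is Delzant for all sufficiently small $t_0>0$.}

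To attack this I would first reformulate Delzant-ness of the $\QQ$-polygon $\Omega_{t_0}$ (it is a polygon by Proposition~\ref{prop_basicsF}). A vertex $P$ of $\Omega_{t_0}$ lies in the relative interior of an edge of $C_\Omega$ separating two faces on which $F_\Omega$ is linear with gradients $v_1,v_2$; the two sides of $\Omega_{t_0}$ issuing from $P$ are orthogonal to $v_1$ and $v_2$, so $P$ is Delzant if and only if $\det(v_1,v_2)=\pm1$. After translating so that $0\in\Omega^\circ$ we have $\alpha_v=h_\Omega(-v)\ge 0$ with equality only for $v=0$, and $\alpha_{kv}=k\alpha_v$; hence for $z\in\Omega^\circ$ the minimiser in $F_\Omega(z)=\inf_v(\alpha_v+v\cdot z)$ is always primitive, and the gradients $v_1,v_2$ above are primitive. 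The target thus becomes: \emph{every edge of $C_\Omega$ met by a shallow level set separates faces whose primitive gradients form a basis of $\ZZ^2$.}

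Now I would use the absence of corners. Since $\partial\Omega$ is smooth and strictly convex, the Gauss map identifies boundary points with normal directions, every primitive $v$ is the outer normal at a single boundary point, and no boundary point carries a cone of normals; in the particle model this says $C_\Omega$ has no one-valent vertices on $\partial\Omega$ and is an infinite tree accumulating on $\partial\Omega$ in Stern--Brocot order. I would then let $t_0$ decrease from $\mathbf m_\Omega$ to $0$ and prove by induction that cyclically consecutive contributing directions stay Farey neighbours. A new direction $w$ is pinched off between two previously consecutive neighbours $v_1,v_2$ at a trivalent vertex of $C_\Omega$, and there the three-case analysis behind Theorem~\ref{thm_deltedlz} (Figure~\ref{fig_collint}) shows that a non-maximal collision must be of the removable type of Figure~\ref{fig_blowdownc}; hence the emitted face has gradient equal to the mediant $w=v_1+v_2$, so $\det(v_1,w)=\det(w,v_2)=\det(v_1,v_2)$ stays $\pm1$. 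Descending toward the boundary, the labels of the faces adjacent to $P$ become the continued-fraction convergents of the (possibly irrational) normal direction at the limiting boundary point, and consecutive convergents always satisfy $\det=\pm1$, exactly as in the example of the triangle with irrational slope; this yields $\det(v_1,v_2)=\pm1$.

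The main obstacle is precisely the base of this induction and the infinite accumulation of $C_\Omega$ at the smooth boundary: unlike in the Delzant polygon case there is no finite starting polygon and no a priori Delzant hypothesis, so I must genuinely exclude a non-smooth (multiplicity $\ge2$) collision arising arbitrarily close to $\partial\Omega$ — this is the only place where ``no corners'' is indispensable, since a corner would inject a whole cone of normals and produce a dual triangle of area $>\tfrac12$. I expect the cleanest rigorous way around it is an approximation argument: choose Delzant polygons $\Delta^{(n)}\uparrow\Omega$ with $F_{\Delta^{(n)}}\to F_\Omega$ locally uniformly; a hypothetical vertex of $F_\Omega^{-1}(t)$ with $\det(v_1,v_2)=m\ge2$ forces the two strict minimisers $v_1,v_2$ to persist in $\Delta^{(n)}$ for large $n$, and tracking the Farey chain joining them inside the Delzant polygon $\Delta^{(n)}$ (all of whose level sets are Delzant by Theorem~\ref{thm_deltedlz}) contradicts $m\ge2$. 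Verifying that this chain stays angularly between $v_1$ and $v_2$, so that the contradiction is genuine, is the delicate point I would still need to nail down.
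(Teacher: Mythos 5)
You correctly reduce everything, via the semigroup property $F_{\Omega_{t_0}}=F_\Omega-t_0$ and Theorem~\ref{thm_deltedlz}, to showing that $\Omega_{t_0}$ is Delzant for arbitrarily small $t_0>0$, and you correctly identify the crux as: a non-unimodular pair of adjacent gradients of $F_\Omega$ forces a corner of $\partial\Omega$. But neither of your two routes to that crux closes. Your descending Farey induction has no base case: near the maximum the contributing monomials form the seed $\delta_\Omega$, an arbitrary lattice polygon with the origin as its only interior lattice point, and cyclically consecutive vertices of such a polygon need \emph{not} form a basis of $\ZZ^2$ (take the triangle with vertices $(1,0)$, $(-1,2)$, $(-1,-2)$, where $\det\bigl((1,0),(-1,2)\bigr)=2$); ruling out such seeds already requires exactly the corner mechanism you are trying to establish. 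The induction step is also circular: the three-case collision analysis of Theorem~\ref{thm_deltedlz} (Figure~\ref{fig_collint}) uses the Delzant hypothesis at the ends of the shrinking side — that $\nabla\lambda_{s_i},\nabla\lambda_s$ form a basis of $\ZZ^2$ — which is precisely what is unknown for a general $\Omega$. Your fallback approximation by Delzant polygons $\Delta^{(n)}\uparrow\Omega$ is, by your own admission, left with an unverified ``delicate point,'' so the proposal as written does not prove the theorem.

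The missing idea — which is the paper's entire proof, no induction or approximation needed — is a direct local argument along a single edge. If some intermediate $\Omega_\e$ has a non-Delzant vertex, the edge of $C_\Omega$ it crosses is a non-maximal \emph{multiple} edge: if the adjacent gradients satisfy $|\det(v_1,v_2)|\geq 2$, the lattice triangle with vertices $0,v_1,v_2$ contains an extra lattice point $w$, and any $w=a\cdot 0+\lambda v_1+(1-\lambda-a)v_2$ with $a>0$ would give $\alpha_w+w\cdot z<F_\Omega(z)$ on the edge (since $F_\Omega(z)=\e>0$ there), contradicting that $F_\Omega$ is the minimum in \eqref{eq_fomega}; so $w$ lies on the segment $[v_1,v_2]$, i.e.\ the edge is multiple. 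For such $w=\lambda v_1+(1-\lambda)v_2$, minimality of $F_\Omega$ forces $\alpha_w\geq\lambda\alpha_{v_1}+(1-\lambda)\alpha_{v_2}$ along the edge, while subadditivity of the support function in \eqref{eq_alphav} gives the reverse inequality; hence equality holds, the third monomial $w$ contributes along the edge, and equality in subadditivity means the support lines for $v_1$, $w$, $v_2$ touch $\Omega$ at a common boundary point — a corner of $\partial\Omega$. This single observation is where ``no corners'' enters and it replaces your base case, your induction step, and your approximation scheme all at once; you had the converse intuition (``a corner injects a cone of normals'') but never proved the direction actually needed.
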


\begin{proof}
For a general $\Omega$ the curve $C_{\Omega_\epsilon}$ is supported on a tree for $\bold{m}_\Omega>\epsilon>0,$ where $\bold{m}_\Omega$ denotes the maximal value of $F_\Omega$. Therefore, $\Omega_\epsilon$ is not Delzant if and only if  $C_{\Omega}$ has a non-maximal multiple edge. In particular, there exist at least three monomials of $F_\Omega$ contributing along such edge. Their corresponding supporting lines intersect along a corner of $\partial\Omega.$
\end{proof}

Note that, although it is less instructive, this short proof works for Theorem \ref{thm_deltedlz} as well. We prefer to keep the longer proof in section \ref{sec_curvepol} since its longer surrounding narrative serves as a friendly introduction to tropical curves.
 
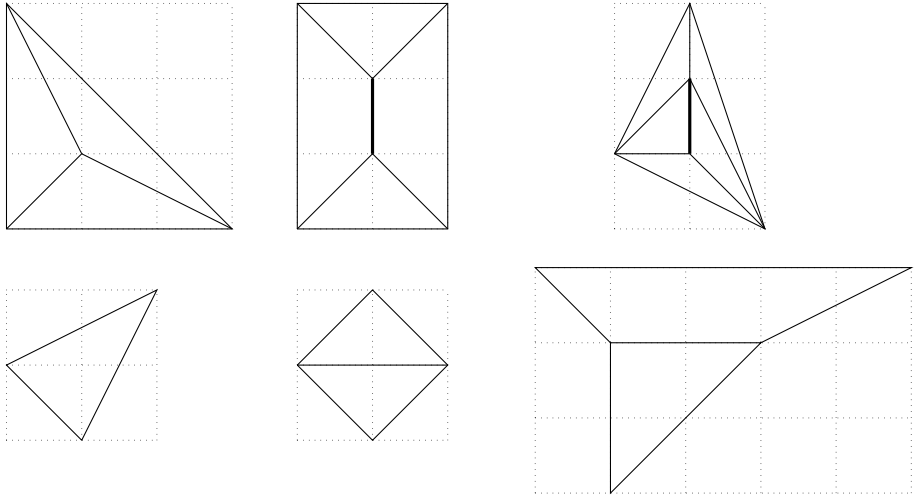
\begin{figure}
\begin{tikzpicture}
\draw[black,step = 1.0,very thin, dotted](0,0) grid (3,3);
\draw(0,0)--++(0,3)--++(3,-3)--++(-3,0);
\draw(1,1)--++(-1,-1);
\draw(1,1)--++(-1,2);
\draw(1,1)--++(2,-1);

\begin{scope}[yshift=-80]
\draw[black,step = 1.0,very thin, dotted](0,0) grid (2,2);
\draw(1,0)--++(1,2)--++(-2,-1)--++(1,-1);
\end{scope}

\begin{scope}[xshift=110]
\draw[black,step = 1.0,very thin, dotted](0,0) grid (2,3);
\draw(0,0)--++(0,3)--++(2,0)--++(0,-3)--++(-2,0);
\draw(0,0)--++(1,1)--++(1,-1);
\draw(0,3)--++(1,-1)--++(1,1);
\draw[very thick](1,1)--++(0,1);

\begin{scope}[yshift=-80]
\draw[black,step = 1.0,very thin, dotted](0,0) grid (2,2);
\draw(0,1)--++(1,1)--++(1,-1)--++(-1,-1)--++(-1,1);
\draw(0,1)--++(2,0);
\end{scope}

\end{scope}

\begin{scope}[xshift=230]
\draw[black,step = 1.0,very thin, dotted](0,0) grid (2,3);
\draw(0,1)--++(1,2)--++(1,-3)--++(-2,1);
\draw(0,1)--++(1,0);
\draw[very thick](1,1)--++(0,1);
\draw(1,2)--++(0,1);
\draw(1,1)--++(1,-1);
\draw(1,2)--++(1,-2);
\draw(0,1)--++(1,1);

\begin{scope}[yshift=-100,xshift=-30]
\draw[black,step = 1.0,very thin, dotted](0,0) grid (5,3);
\draw(1,2)--++(-1,1)--++(5,0)--++(-2,-1)--++(-2,-2)--++(0,2)--++(2,0);
\end{scope}

\end{scope}

\end{tikzpicture}

\caption{Examples of $\Omega$ with $C_\Omega$ inside. The corresponding seeds $\delta_\Omega$ are shown in the second row. Note that the right one is not convex, so the multiplicity two (thick) edge $F^{-1}_\Omega(\bold{m}_\Omega)$ has always non-zero length for domains with such a seed.}

\label{fig_seeds}
\end{figure}

We would like to capture {\it a type of singularity} along the segment $F_\Omega^{-1} ({\bold m}_\Omega)$.   Denote by $\delta_\Omega$ the {\it seed} of the tree $C_\Omega,$ defined as the union of dual polygons to the ends of the maximal segment, see Figure \ref{fig_seeds}. Note that the segment can be degenerate, in this case $\delta_\Omega$ is just the  (convex) lattice polygon dual to the vertex $p$ where the maximum is attained. The following proposition is straightforward.
 
 \begin{proposition}
 The origin is the only lattice point in the interior of $\delta_\Omega.$
 \end{proposition}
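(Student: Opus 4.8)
The plan is to realize the seed inside a single sublevel set of the convex function dual to $F_\Omega$ and simply read off the lattice points. For a point $p\in\Omega^\circ$ I would consider, on the whole plane of exponents, the function
$$
\Phi_p(v)=\alpha_v+v\cdot p=\max_{z\in\Omega} v\cdot(p-z),
$$
where $\alpha_v=-\min_{z\in\Omega}z\cdot v$ is the quantity from \eqref{eq_alphav} extended to all real $v$. As a maximum of linear functions $\Phi_p$ is convex, and since $p$ is interior one has $\Phi_p(v)\ge 0$ with equality only at $v=0$; thus $\Phi_p$ has a unique minimum at the origin, and by \eqref{eq_fomega} its minimum over the nonzero lattice equals $F_\Omega(p)$. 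I will apply this at the endpoints $p_1,p_2$ of the maximal set $M=F_\Omega^{-1}(\mathbf{m}_\Omega)$ (with $p_1=p_2$ if $M$ is a point), whose dual polygons $P_1,P_2$ are by definition the pieces of $\delta_\Omega$.

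The crucial observation is that $\mathbf{m}_\Omega=\min_{v\in\mathbb{Z}^2\setminus\{0\}}\Phi_{p_i}(v)$ is the \emph{smallest positive} value attained by $\Phi_{p_i}$ on the lattice. Hence the convex body $K_i=\{v:\Phi_{p_i}(v)\le \mathbf{m}_\Omega\}$ contains the origin in its interior, contains every monomial active at $p_i$ on its boundary, and contains no further lattice point: a nonzero lattice point of $K_i$ would satisfy $0<\Phi_{p_i}\le\mathbf{m}_\Omega$ and hence, by minimality, $\Phi_{p_i}=\mathbf{m}_\Omega$, i.e.\ it would be active. Since $P_i$ is the convex hull of the active monomials and $K_i$ is convex, $P_i\subseteq K_i$, so $\operatorname{int}(P_i)\subseteq\operatorname{int}(K_i)$ meets $\mathbb{Z}^2$ at most in the origin.

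Next I would pin down the origin using the two facts already in hand: $F_\Omega$ is concave (Proposition \ref{prop_basicsF}) and the maximal segment is an edge of $C_\Omega$ of multiplicity two. Writing $v_1,v_2$ for the two monomials active along the interior of $M$, the relations $\alpha_{v_j}+v_j\cdot z=\mathbf{m}_\Omega$ holding along $M$ force each $v_j$ orthogonal to $M$, so $v_1,v_2$ and the origin are collinear; since $M$ is a maximum of the concave $F_\Omega$, the monomials decrease to both sides, which places the origin strictly between $v_1$ and $v_2$. Multiplicity two means the shared edge $e=[v_1,v_2]$ of $P_1$ and $P_2$ has lattice length two, so the origin is its midpoint ($v_2=-v_1$, primitive) and is the only interior lattice point of $e$. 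In particular the origin is interior to $\delta_\Omega$, lying on the common edge with $P_1,P_2$ on its two sides.

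Assembling the pieces, a lattice point interior to $\delta_\Omega=P_1\cup P_2$ must be interior to some $P_i$ or lie in the relative interior of $e$; both options produce only the origin, and in the segment case the origin sits on $\partial P_i$ so no $P_i$ contributes an interior lattice point at all. When $M$ degenerates to a point, $\delta_\Omega=P_1$ and the origin is interior to $P_1$ (the active monomials surround it because $p_1$ is a strict maximum), so again the origin is the unique interior lattice point. I expect the only genuinely delicate step to be this last piece of bookkeeping: the origin indexes the ``constant monomial'' and is \emph{never itself} an active minimizer (hence never a vertex of the dual subdivision), yet it must end up interior to $\delta_\Omega$; the multiplicity-two computation on $e$ is exactly what reconciles these two facts.
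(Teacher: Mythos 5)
The paper offers no written proof here---it simply declares the proposition straightforward---so there is nothing for your argument to diverge from; your proof is correct, and it is precisely the reasoning the surrounding text presupposes: since $\mathbf{m}_\Omega$ is the minimum of $\alpha_v+v\cdot p_i$ over nonzero lattice vectors, every nonzero lattice point of the sublevel body $K_i$ lies on its boundary (hence is active), which keeps the interiors of the dual polygons free of nonzero lattice points, while the multiplicity-two structure of the maximal edge places the origin in the interior of $\delta_\Omega$. One remark: the fact you import, that the maximal edge has multiplicity two with $v_2=-v_1$ primitive, is proved in the paper only for Delzant polygons (Proposition \ref{prop_lastedge}), but your own minimality observation already yields it for general $\Omega$---if $v_1=aw$ with $w$ primitive and $a\geq 2$, then $\Phi_{p_i}(w)=\mathbf{m}_\Omega/a<\mathbf{m}_\Omega$, a contradiction---so your argument is in fact self-contained.
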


The space of all convex domains is therefore stratified with respect to the type of singularity of $C_\Omega$ along the maximum of $F_\Omega.$ The strata are enumerated by different seeds $\delta_\Omega$. Each stratum is parametrized by the lattice lengths of edges of $C_\Omega.$ 

\begin{remark}
The seed $\delta_\Omega$ and the moduli of $C_\Omega$ determine $\Omega$ up to parallel translations.
\label{rem_convexcoordinates}
\end{remark}

We can interpret the statement of Remark \ref{rem_convexcoordinates} as an identification of the space of convex domains with a certain moduli space of infinetly punctured tropical curves. Practically, this means that the lengths of the edges in $C_\Omega$ are the complete coordinates for $\Omega,$ and therefore its invariants, such as lattice perimeter or area, have to be expressible in terms of these coordinates. The formula \eqref{eq_main2} is a manifestation of the principle in the case of $\Omega$ equal to a disk. 

\begin{remark}
If $p_1$ and $p_2$ are adjacent vertices of $C_\Omega$ such that $F_\Omega(p_1)<\bold{m}_\Omega$ then $$\operatorname{Length}_\ZZ [p_1,p_2]=|F_\Omega(p_1)-F_\Omega(p_2)|,$$ where $[p_1,p_2]$ denotes the segment of $C_\Omega$ connecting the vertices. \end{remark}

\subsection{Tropical series of the unit disk}
This section gives one more explanation of the Example \ref{ex_pi}.
 \begin{figure}[h]
\begin{minipage}{0.45\textwidth}
\includegraphics[width=\linewidth]{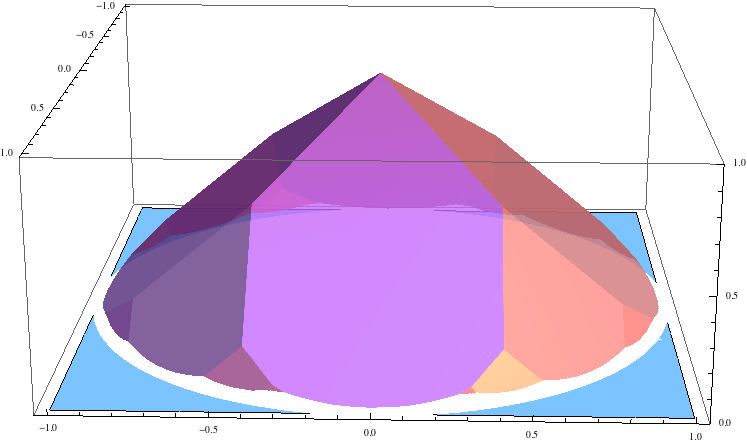}
\end{minipage}
\hfill%
\begin{minipage}{0.45\textwidth}\raggedleft
\includegraphics[width=\linewidth]{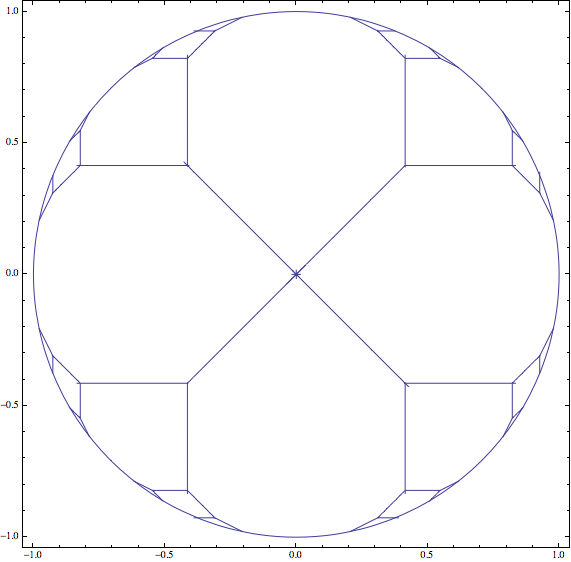}
\end{minipage}
\caption{A plot of the series $F_\Omega$ and its tropical analytic curve $C_\Omega$ for $\Omega$ a disk.}
\label{fig_circlecurve}
\end{figure}
Let $\Omega$ be the unit disk $\{x^2+y^2\leq 1\}$.  Applying equations \eqref{eq_alphav} and \eqref{eq_fomega}, $F$ is given by

$$ F(z)=\min_{v\in \mathbb{Z}^2} (v\cdot z+|v|).$$

 The graph of $F$ and its corner locus $C$ are shown on the Figure \ref{fig_circlecurve}. Notice that only four {\it tropical monomials} contribute to $F$ at the origin. These are $1+x,1-x,1+y,1-y$ and the maximum of $F$ is $1.$ The monomials give an X part of $C$ near the origin where the maximum is taken.

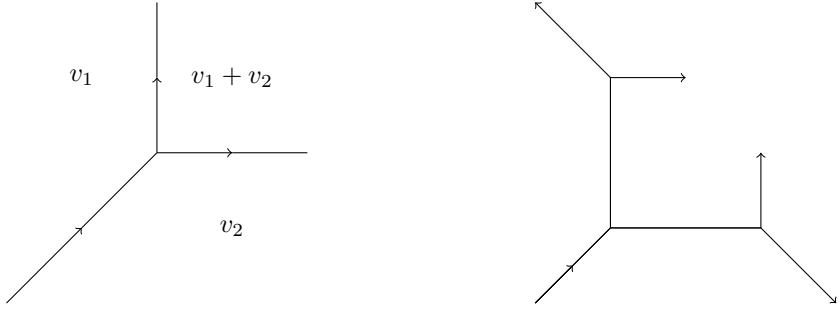
\begin{figure}[h]
\begin{tikzpicture}

\draw(0,0)[->]--++(1,1);
\draw(1,1)--++(1,1);
\draw(2,2)[->]--++(1,0);
\draw(2,2)[->]--++(0,1);
\draw(3,2)--++(1,0);
\draw(2,3)--++(0,1);
\draw(3,3)node{$v_1+v_2$};
\draw(3,1)node{$v_2$};
\draw(1,3)node{$v_1$};

\begin{scope}[xshift=200]
\draw(0,0)[->]--++(0.5,0.5);
\draw(0,0)[->]--++(1,1)--++(2,0)--++(1,-1);
\draw(1,1)[->]--++(2,0)--++(0,1);
\draw(0,0)[->]--++(1,1)--++(0,2)--++(-1,1);
\draw(1,1)[->]--++(0,2)--++(1,0);
\end{scope}

\end{tikzpicture}
\caption{On the left: branching pattern for the tropical curve at the vertex $z(v_1,v_2)$, the arrow denotes the direction towards the boundary. The vectors $v_1,v_2,v_1+v_2$ are the values of the gradient of $F$ in the complement to $C,$ its edges are orthogonal to $v_1-v_2,v_1$ and $v_2.$   On the right: a schematic picture for two more branchings of $C,$ the vertices are $z(v_1,v_2),$ $z(v_1,v_1+v_2)$ and $z(v_1+v_2,v_2).$ The parallel segments of the drawing represent parallel edges of $C.$ Compare this with Figures \ref{fig_blowdownc} and \ref{fig_circlecurve}.}
\label{fig_newvertex}
\end{figure}

Clearly, $C$ is a tree and, apart from the origin, all the vertices of $C$ are trivalent. Starting from the origin, the tree goes in four directions and stratas branching infinitely many times. A branching gives a vertex and the third monomial adjacent to the new vertex is chosen in a systematic manner shown in the Figure \ref{fig_newvertex}.  Each such vertex $z(v_1,v_2)$ is adjacent to the three regions in the complement to $C$ and on these regions $F$ is represented by the monomials $|v_1|+z\cdot v_1,$ $|v_2|+z\cdot v_2$ and $|v_1+v_2|+z\cdot (v_1+v_2),$ where $v_1$ and $v_2$ are vectors from the same quadrant in $\mathbb{R}^2$ forming the basis of $\mathbb{Z}^2.$ The value of $F$ at $z(v_1,v_2)$ is given by \begin{equation} F(z(v_1,v_2))=|v_1|+|v_2|-|v_1+v_2|\label{eq_seriesatz}
\end{equation} which is exactly the error to the triangular inequality for the primitive triangle formed by vectors $v_1,v_2,v_1+v_2.$ The only multiple vertex of $C_\Omega$ is the origin, where the maximum $\bold{m}_\Omega=1$ is attained and the multiplicity is equal to $4$ since $\delta_\Omega$ is a union of $4$ triangles with area $1/2.$ 

%
%
%
%
%
%
%
%
%
%
%
%
%
%
%

\medskip

%
%

\subsection{Lattice, Euclidean and symplectic perimeter: conservation laws}
\label{sec_lateuclsympl}
While cutting the corners of a polygon (going from right to left on Figure \ref{fig_blowdownc}) one can observe that certain quantities are preserved. Most significantly we have
$$\Length_\ZZ \partial\Omega+\Length_\ZZ C_\Omega=12 \bold{m}_\Omega+4 \Length_\ZZ F_\Omega^{-1}(\bold{m}_\Omega).$$

There is a similar and simpler identity for the symplectic length (\cite{yuArea,us_series}) $$\Length_s \partial\Omega-\Length_s C_\Omega=0.$$ It follows from the deformation invariance of the symplectic length and from the fact that $C_\Omega$ can be deformed on $\partial\Omega.$ In the case when $\Omega$ is a $\mathbb{Q}$-polygon it is finite and furthermore the numbers of vertices of $C_\Omega$ and of $\partial\Omega$ coincide if we count them with multiplicities. 
 
\subsection{Universal formulas} 
\label{sec_uniformls}
For $(v_1 v_2)\in SL(2,\ZZ)$ define $f(v_1v_2)$ to be equal to $F_\Omega(z(v_1,v_2))$ if there exists  a vertex $z(v_1,v_2)$ of  $C_\Omega$ where monomials $v_1,v_2$ and $v_1+v_2$ contribute and $f(v_1v_2)=0$ if such a vertex
 doesn't exist. Denote by $\hat\Omega$ the minimal model of $\Omega,$ i.e. the polygon in which $\Omega$ is inscribed with an unbranched tropical curve $C_{\hat\Omega}$ coinciding with $C_\Omega$ near their maximal edges. For example, the minimal model of a disk is a square.  Then $\Omega$ is the result of corner cuts (exhaustion), each cut has a shape of Delzant triangle of the size $f(v_1v_2).$ We have the following
  
$$\Length_\ZZ \partial \Omega=\Length_\ZZ \partial\hat \Omega-\sum_{(v_1v_2)\in SL_2\ZZ} f(v_1v_2)$$

$$\Length \partial \Omega=\Length \partial \hat \Omega-\sum_{(v_1v_2)\in SL_2\ZZ} f(v_1v_2) (|v_1|+|v_2|-|v_1+v_2|)$$

$$\Length_s \partial \Omega=\Length_s \partial \hat \Omega+2\sum_{(v_1v_2)\in SL_2\ZZ} f(v_1v_2)(v_1\cdot v_2)$$

$$\operatorname{Area}  \Omega=\operatorname{Area} \hat \Omega-{1\over 2}\sum_{(v_1v_2)\in SL_2\ZZ} f^2(v_1v_2)$$

$$\int_\Omega F_\Omega=\int_{\hat\Omega}F_{\hat \Omega}-{1\over 6}\sum_{(v_1v_2)\in SL_2\ZZ} f^3(v_1v_2).$$

Note that in the case of the unit disk \eqref{eq_seriesatz} the formulas for the length of $\partial\Omega$ and for the area of $\Omega$ become identical.

\subsection{Example: disk in $L^\mu$-norm}

Consider the domain $\Omega_\mu$ be given by $|x|^\mu+|y|^\mu\leq 1$, for $\mu>1.$ Note that due to the symmetry under reflections the germ of $C_{\Omega_\mu}$ near the origin doesn't depend on $\mu.$ Therefore, all $\Omega_\mu$ have the same minimal model $\hat\Omega_\mu=[-1,1]^2.$ 






An easy computation shows that a quarter of the area of $\Omega_\mu$ is equal to $${{\Gamma^2(2-\nu^{-1})}\over{\Gamma(3-2\nu^{-1})}}=1-{1\over 2}\sum(|v_1|_\nu+|v_2|_\nu-|v_1+v_2|_\nu)^2,$$ where $|\cdot|_\nu$ denotes the dual norm, i.e. $\mu^{-1}+\nu^{-1}=1,$ and the sum runs over ${(v_1v_2)\in SL^+(2,\ZZ)}.$

\bibliography{../../bibliography}

\begin{thebibliography}{1}

\bibitem{BIMS}
E.~Brugall{\'e}, I.~Itenberg, G.~Mikhalkin, and K.~Shaw.
\newblock Brief introduction to tropical geometry.
\newblock In {\em Proceedings of the {G}{\"o}kova {G}eometry-{T}opology
  {C}onference 2014}, pages 1--75. G{\"o}kova Geometry/Topology Conference
  (GGT), G{\"o}kova, 2015.

\bibitem{MR1853077}
A.~Cannas~da Silva.
\newblock {\em Lectures on symplectic geometry}, volume 1764 of {\em Lecture
  Notes in Mathematics}.
\newblock Springer-Verlag, Berlin, 2001.

\bibitem{MR984900}
T.~Delzant.
\newblock Hamiltoniens p{\'e}riodiques et images convexes de l'application
  moment.
\newblock {\em Bull. Soc. Math. France}, 116(3):315--339, 1988.

\bibitem{hardy1979introduction}
G.~H. Hardy and E.~M. Wright.
\newblock {\em An introduction to the theory of numbers}.
\newblock Oxford University Press, 1979.

\bibitem{pi_short}
N.~Kalinin and M.~Shkolnikov.
\newblock The number $\pi$ and a summation by ${S}{L}(2, \mathbb{Z})$.
\newblock {\em Arnold Mathematical Journal}, 3(4):511--517, 2017.

\bibitem{us_series}
N.~Kalinin and M.~Shkolnikov.
\newblock Introduction to tropical series and wave dynamic on them.
\newblock {\em Discrete \& Continuous Dynamical Systems-A}, 38(6):2843--2865,
  2018.

\bibitem{misha_thesis}
M.~Shkolnikov.
\newblock {\em Tropical curves, convex domains, sandpiles and amoebas}.
\newblock PhD thesis, 06/23 2017.
\newblock ID: unige:96300.

\bibitem{yuArea}
T.~Y. Yu.
\newblock The number of vertices of a tropical curve is bounded by its area.
\newblock {\em Enseign. Math.}, 60(3-4):257--271, 2014.

\end{thebibliography}
\bibliographystyle{abbrv}

\end{document}